\theoremstyle{plain}
\newtheorem{thm}{Theorem}[section]
\newtheorem{lem}[thm]{Lemma}
\newtheorem{prop}[thm]{Proposition}
\theoremstyle{remark}
\newtheorem*{remark}{Remark}
\newtheorem*{rmk}{Remarks}
\renewcommand{\phi}{\varphi}
\newcommand{\Ss}{\mathcal{S}}
\newcommand{\Z}{\mathbf{Z}}
\newcommand{\Q}{\mathbf{Q}}
\newcommand{\sA}{\mathcal{A}}
\newcommand\leg{\genfrac(){.4pt}{}}
\title{{Bounds for the first several prime character nonresidues}}
\author{{Paul Pollack}}
\date{\vspace{-3.2ex}}
\begin{document}
\maketitle

\begin{abstract}\noindent Let $\varepsilon > 0$. We prove that there are constants $m_0=m_0(\varepsilon)$ and $\kappa=\kappa(\varepsilon) > 0$ for which the following holds: For every integer $m > m_0$ and every nontrivial Dirichlet character modulo $m$, there are more than $m^{\kappa}$ primes $\ell \le m^{\frac{1}{4\sqrt{e}}+\varepsilon}$ with $\chi(\ell)\notin \{0,1\}$. The proof uses the fundamental lemma of the sieve, Norton's refinement of the Burgess bounds, and a result of Tenenbaum on the  distribution of smooth numbers satisfying a coprimality condition. For quadratic characters, we demonstrate a somewhat weaker lower bound on the number of primes $\ell \le m^{\frac14+\epsilon}$ with $\chi(\ell)=1$.
\end{abstract}

\section{Introduction}
Let $\chi$ be a nonprincipal Dirichlet character.  An integer $n$ is called a $\chi$-\emph{nonresidue} if $\chi(n) \notin \{0,1\}$. Problems about character nonresidues go back to the beginnings of modern number theory. Indeed, one can read out of Gauss's \emph{Disquisitiones} that for primes $p\equiv 1\pmod{8}$ and $\chi(\cdot) = \leg{p}{\cdot}$, the smallest $\chi$-nonresidue does not exceed $2\sqrt{p}+1$ \cite[Article 129]{gauss86}. This was an auxiliary result required for Gauss's first proof of the quadratic reciprocity law.

In the early 20th century, I.\,M. Vinogradov initiated the study of how the quadratic residues and nonresidues modulo a prime $p$ are distributed in the interval $[1,p-1]$. A particularly natural problem is to estimate the size of $n_p$, the smallest quadratic nonresidue modulo $p$. Vinogradov conjectured that $n_p \ll_{\varepsilon} p^{\varepsilon}$, for each $\varepsilon >0$. By means of a novel estimate for character sums (independently discovered by P\'olya), coupled with a clever sieving argument, he showed \cite{vinogradov18} that $n_p \ll_{\varepsilon} p^{\frac{1}{2\sqrt{e}} + \varepsilon}$. Burgess's character sum bounds \cite{burgess57}, in conjunction with Vinogradov's methods, yield the sharper estimate
\begin{equation}\label{eq:burgessquadratic} n_p \ll_{\varepsilon} p^{\frac{1}{4\sqrt{e}}+\varepsilon}. \end{equation}
Fifty years of subsequent research has not led to any improvement in the exponent $\frac{1}{4\sqrt{e}}$. But generalizing \eqref{eq:burgessquadratic}, Norton showed that if $\chi$ is any nontrivial character modulo $m$, then the least $\chi$-nonresidue is $O_{\varepsilon}(m^{1/4\sqrt{e} + \varepsilon})$. See \cite[Theorem 1.30]{norton98}.

Since $\chi$ is completely multiplicative, the smallest $\chi$-nonresidue is necessarily prime. In this note, we prove that there are actually many prime $\chi$-nonresidues satisfying the Burgess--Norton upper bound.

\begin{thm}\label{thm:main} For each $\varepsilon > 0$, there are numbers $m_0(\varepsilon)$ and $\kappa=\kappa(\varepsilon)> 0$ for which the following holds: For all $m > m_0$ and each nontrivial character $\chi$ mod $m$, there are more than $m^{\kappa}$ prime $\chi$-nonresidues not exceeding $m^{\frac{1}{4\sqrt{e}}+\varepsilon}$.
\end{thm}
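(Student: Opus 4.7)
My approach would be to argue by contradiction, adapting the Vinogradov--Burgess--Norton method for a single nonresidue to extract many. Suppose that $\mathcal{P}$, the set of prime $\chi$-nonresidues up to $y := m^{1/(4\sqrt{e})+\varepsilon}$, has cardinality at most $m^{\kappa}$, for some small $\kappa=\kappa(\varepsilon)>0$ to be pinned down at the end. Setting $x := m^{1/4+\varepsilon'}$ with $0 < \varepsilon' < \sqrt{e}\,\varepsilon$ makes $u := \log x/\log y$ lie strictly below $\sqrt{e}$; since $\rho(u) = 1-\log u$ on $[1,2]$, this choice places $\rho(u)$ strictly above $\tfrac{1}{2}$ with a positive margin depending on $\varepsilon$.

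Complete multiplicativity of $\chi$ then ensures that any $n \le x$ which is $y$-smooth and coprime to $M := m\prod_{\ell \in \mathcal{P}}\ell$ satisfies $\chi(n)=1$. Write $G(x,y)$ for the count of such $n$ and $H(x,y)$ for the count of $n \le x$ that are coprime to $M$ but not $y$-smooth. Tenenbaum's theorem on smooth numbers with a coprimality condition would yield $G(x,y) \sim x\rho(u)\prod_{\ell \le y,\,\ell \mid M}(1-1/\ell)$, while the fundamental lemma of the sieve would give $G(x,y)+H(x,y) \sim x\prod_{\ell \mid M}(1-1/\ell)$. Since each prime $\ell \mid m$ with $\ell > y$ contributes a factor essentially equal to $1$, the two Mertens-type products agree up to $1+o(1)$, and hence $H(x,y) \sim x(1-\rho(u))\prod_{\ell \le y,\,\ell \mid M}(1-1/\ell)$.

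The remaining ingredient is a character-sum bound on the sifted set. By M\"obius inversion,
$$\sum_{n\le x,\,(n,M)=1}\chi(n) = \sum_{d \mid \prod \mathcal{P}}\mu(d)\chi(d)\sum_{n \le x/d}\chi(n).$$
I would truncate the outer sum at a sieve level $D$ using the fundamental lemma and bound each inner sum by Norton's refinement of the Burgess bound, to obtain $|\sum_{n \le x,\,(n,M)=1}\chi(n)| \ll x^{1-\delta}$ for some $\delta = \delta(\varepsilon)>0$. Because smooth coprime $n$ each contribute $+1$ to this sum while every other coprime term has modulus at most $1$, one obtains $G(x,y) \le H(x,y) + O(x^{1-\delta})$. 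Inserting the asymptotics above and dividing by $x\prod_{\ell \le y,\,\ell \mid M}(1-1/\ell)$, which is $\gg 1/\log m$ by Mertens, would yield $\rho(u) \le 1-\rho(u)+o(1)$, contradicting the choice of $u$ for all sufficiently large $m$.

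The main obstacle is the character-sum estimate in the third paragraph. Applying Burgess term by term to the M\"obius expansion is hopeless: $\prod\mathcal{P}$ can carry as many as $m^\kappa$ prime factors, so the sum ranges over up to $2^{m^\kappa}$ divisors. The fundamental lemma is essential here for replacing $\mu(d)$ by truncated weights $\lambda_d$ supported on $d \le D$ with $|\lambda_d|\le 1$, reducing the sum to a manageable finite combination. Balancing $D$ so that the sieve error is absorbable \emph{and} so that $x/d$ stays in the Burgess range $\ge m^{1/4+\varepsilon''}$ is the delicate point, and it is this balance that pins down the final admissible $\kappa(\varepsilon) > 0$.
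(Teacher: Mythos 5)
Your high-level strategy is the same as the paper's: compare a Tenenbaum-style lower bound for smooth coprime integers against a Burgess--Norton plus fundamental-lemma upper bound, and derive a contradiction from $\rho(u)>\tfrac12$. The reformulation via $G-H\le \operatorname{Re}\sum_{(n,M)=1}\chi(n)$ rather than the paper's $k\cdot\#\{n\in\mathcal A:(n,q)=1\}$ is only cosmetic. However, there is one genuine gap and one smaller inaccuracy.

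The genuine gap is the treatment of Norton's factor $R_k(m)$. With your fixed choice $x=m^{1/4+\varepsilon'}$ and $\varepsilon'$ small, a power saving in $\sum_{e\le x/d}\chi(e)$ requires taking $r$ large in Proposition \ref{prop:norton}, and then the factor $R_k(m)^{1/r}$ cannot be omitted. But $R_k(m)=\min\{M(m)^{3/4},Q(k)^{9/8}\}$ can be a positive power of $m$: for example, take $m=p^{100}$ and a character of order $k=p^{50}$, so that $R_k(m)=p^{56.25}=m^{0.5625}$. In that regime the Norton bound at level $x=m^{1/4+\varepsilon'}$ gives nothing for any $r$, and the low-$r$ cases ($r\le 3$, where the factor vanishes) are also useless because $x$ is too small. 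Your proposal has no mechanism to handle this. The paper sidesteps it by proving two variant results (Theorems~\ref{thm:fixedprime0} and~\ref{thm:fixedprime1}): Theorem~\ref{thm:fixedprime0} takes $x=m^{1/3+\delta}$ so that $r=3$ suffices and no $R_k(m)$ appears, while Theorem~\ref{thm:fixedprime1} accepts $R_k(m)$ but builds it into the definition of $x$ as $x=R_k(m)m^{1/4+\delta}$. The final theorem is obtained by splitting on whether the order $k$ exceeds a threshold $k_1$; for $k\ge k_1$ one uses the first variant (and $u_k$ is large enough to compensate for the worse exponent $1/3$), while for $k_0\le k<k_1$ the boundedness of $k$ forces $Q(k)\ll 1$ and hence $R_k(m)\ll 1$, so the second variant gives the desired exponent. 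Without some version of this case split, your argument does not cover all nontrivial characters mod $m$ when $m$ is highly cube-full.

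A smaller point: the claimed bound $|\sum_{n\le x,\,(n,M)=1}\chi(n)|\ll x^{1-\delta}$ overstates what the fundamental lemma delivers. Replacing $\mu$ by upper sieve weights introduces an error $\sum_n(\lambda^+(n)-\mathbf 1_{(n,q)=1})$ which is only $O(v^{-v})\cdot x\prod_{\ell\mid mq,\ell<z}(1-1/\ell)$, not a power of $x$. What you actually get is $|\sum_{(n,M)=1}\chi(n)|\le \beta\,x\,\frac{\phi(mq)}{mq}+O_{\delta}(x^{1-\delta})$ for an arbitrarily small but fixed $\beta>0$. This is still enough for your contradiction, since $2\rho(u)-1$ is bounded below away from $0$ while $\beta$ may be taken smaller than that; you should just state the estimate in this two-term form.
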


The problem of obtaining an upper bound on the first several prime character nonresidues was considered already by Vinogradov. In \cite{vinogradov18},  he showed that for large $p$, there are at least $\frac{\log{p}}{7\log\log{p}}$ prime quadratic nonresidues modulo $p$ not exceeding \[ p^{\frac{1}{2}-\frac{1}{\log\log{p}}}. \]
For characters to prime moduli, a result resembling Theorem \ref{thm:main} was proved by Hudson in 1983 \cite{hudson83}. (See also Hudson's earlier investigations \cite{hudson73,hudson74,hudson74A}.) But even restricted to prime $m$, Theorem \ref{thm:main} improves on \cite{hudson83} in multiple respects. In \cite{hudson83}, the exponent on $p$ is $\frac{1}{4}+\varepsilon$ instead of $\frac{1}{4\sqrt{e}} +\varepsilon$, and the number of nonresidues produced is only $c_{\varepsilon} \frac{\log{p}}{\log\log{p}}$. Moreover, it is assumed in \cite{hudson83} that the order of $\chi$ is fixed. Stronger results than those of \cite{hudson83} were  announced by Norton already in 1973 \cite{norton74}.\footnote{Norton claims in \cite{norton74}: \emph{Let $\varepsilon>0$ and $k_0 \ge 2$. If $m \ge 3$ and $[(\Z/m\Z)^{\times}: {(\Z/m\Z)^{\times}}^k] \ge k_0$, then each of the smallest $\lfloor \log{m}/\log\log{m}\rfloor$ primes not dividing $m$ that are $k$th power nonresidues modulo $m$ is $\ll_{\varepsilon,k_0}n^{1/4u_{k_0} + \varepsilon}$}. Here $u_{k_0}$ has the same meaning as in our introduction.}  Unfortunately, a full account of Norton's work seems to have never appeared.

It becomes easier to produce small character nonresidues as the order of $\chi$ increases. This phenomenon was noticed by Vinogradov \cite{vinogradov27} and further investigated by Buchstab \cite{buchstab49} and Davenport and Erd\H{o}s \cite{DE52}. To explain their results requires us to first recall the rudiments of the theory of smooth numbers. For each positive integer $n$, let $P^{+}(n)$ denote the largest prime factor of $n$, with the convention that $P^{+}(1)=1$. A natural number $n$ is called \emph{$y$-smooth} (or \emph{$y$-friable}) if $P^{+}(n) \le y$. For $x \ge y \ge 2$, we let $\Psi(x,y)$ be the count of $y$-smooth numbers up to $x$. We let $\rho$ be Dickman's function, defined by
\[ \rho(u)=1\text{ for $0 \le u \le 1$}, \quad \text{and}\quad u \rho'(u) = -\rho(u-1) \quad\text{for $u > 1$}. \]
The functions $\Psi(x,y)$ and $\rho(u)$ are intimately connected; it is known that $\Psi(x,y) \sim x\rho(u)$, where $u:=\frac{\log{x}}{\log{y}}$, in a wide range of $x$ and $y$. In fact, Hildebrand \cite{hildebrand86} has shown that this asymptotic formula holds whenever $x\to\infty$, as long as
\[ y \ge \exp((\log\log{x})^{5/3+\lambda}) \]
for some fixed positive $\lambda$. For this estimate to be useful, one needs to understand the behavior of $\rho(u)$. It is not hard to show that $\rho$ is strictly decreasing for $u > 1$ and that $\rho(u) \le 1/\Gamma(u+1)$. So for any $k > 1$, there is a unique $u_k > 1$ with $\rho(u_k)=\frac{1}{k}$. Buchstab and, independently, Davenport and Erd\H{o}s (developing ideas implicit in \cite{vinogradov27}) showed that if $\chi$ mod $p$ has order $k \ge 2$, then the least $\chi$-nonresidue is $O_{\varepsilon,k}(p^{1/2u_k+\varepsilon})$. If in their argument Burgess's method (which was not available at the time) is used in place of the P\'olya--Vinogradov inequality, then $1/2u_k$ may be replaced by $1/4u_k$ \cite{wy64}. We prove the following:

\begin{thm}\label{thm:fixedprime} Let $\varepsilon >0$ and $k_0 \ge 2$. There are numbers $m_0(\varepsilon,k_0)$ and $\kappa = \kappa(\varepsilon,k_0) > 0$ for which the following holds: For all $m > m_0$ and each nontrivial character $\chi$ mod $m$ of order $k \ge k_0$, there are more than $m^{\kappa}$ prime $\chi$-nonresidues not exceeding $m^{\frac{1}{4u_{k_0}}+\varepsilon}$.\end{thm}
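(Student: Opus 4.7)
\medskip

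\noindent\textbf{Proof plan.} The plan is to imitate the proof of Theorem \ref{thm:main}, of which the present statement is a direct analogue—in fact the case $k_0 = 2$ is essentially recovered, since $u_2 = \sqrt{e}$ and $\rho(u_2) = \tfrac{1}{2}$. The two key inputs—Norton's refinement of Burgess and Tenenbaum's smooth-numbers-with-coprimality estimate—carry over verbatim; only the Dickman numerics shift, and character orthogonality must be taken over the order-$k$ group $\langle\chi\rangle$ rather than over $\{\pm 1\}$.

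Concretely, I would set $x = m^{1/4 + \delta}$ and $y = m^{1/(4 u_{k_0}) + \varepsilon/2}$, with $\delta = \delta(\varepsilon) > 0$ chosen small enough that (i) $u := \log x/\log y < u_{k_0}$, so that $\rho(u) > \rho(u_{k_0}) = 1/k_0$ by strict monotonicity of $\rho$, and (ii) Burgess--Norton yields $\sum_{n \le x}\chi^{j}(n) = o(x)$ for every $j \not\equiv 0 \pmod{k}$. Next I would suppose for contradiction that the set $\sP_0$ of prime $\chi$-nonresidues in $[1,y]$ satisfies $|\sP_0| \le m^\kappa$ for a $\kappa = \kappa(\varepsilon,k_0)$ to be fixed. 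Let $\sA$ denote the set of $y$-smooth integers $n \le x$ coprime to $m \prod_{\ell \in \sP_0}\ell$. For every prime $\ell \le y$ with $\ell \nmid m$ and $\ell \notin \sP_0$ one has $\chi(\ell) \in \{0,1\}$ and $\chi(\ell) \ne 0$, hence $\chi(\ell) = 1$; consequently $\chi(n) = 1$ for every $n \in \sA$.

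The contradiction then arises by comparing a lower bound on $|\sA|$ with an upper bound on $\#\{n \le x : \chi(n) = 1\}$. Tenenbaum's theorem combined with the fundamental lemma of the sieve (as in the proof of Theorem \ref{thm:main}) gives
\[ |\sA| \;\ge\; x\,\rho(u)\!\!\prod_{\ell \mid m \text{ or } \ell \in \sP_0}\!\!(1 - 1/\ell)\,(1+o(1)) \;\ge\; x\,\rho(u)\cdot m^{-o(1)}, \]
using Mertens' estimate to bound the product from below by $\gg 1/\log m$ even in the worst case that $\sP_0$ is concentrated on the smallest primes; this requires $\kappa$ small. On the other hand, orthogonality over $\langle\chi\rangle$ combined with Burgess--Norton yields
\[ \#\{n \le x : \chi(n) = 1\} \;=\; \tfrac{1}{k}\sum_{j=0}^{k-1}\sum_{n \le x}\chi^{j}(n) \;=\; \tfrac{x}{k} + o(x) \;\le\; \tfrac{x}{k_0} + o(x). \]
Since $\sA \subseteq \{n \le x : \chi(n) = 1\}$, combining the two bounds gives $\rho(u) \le 1/k_0 + o(1)$, contradicting $\rho(u) > 1/k_0$ once $m$ is sufficiently large.

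The main obstacle is the joint calibration of $\delta$, $\varepsilon$, and $\kappa$: $\delta$ must simultaneously place $(x,y)$ in the Burgess--Norton admissibility range and preserve a fixed positive gap $u_{k_0} - u$, while $\kappa$ must be small enough that the sieve loss $\prod_{\ell \in \sP_0}(1 - 1/\ell)$ is only $m^{-o(1)}$. These are quantitatively the same balancing acts carried out in the proof of Theorem \ref{thm:main} (the $k_0 = 2$ case); only the arithmetic input $\rho(u_{k_0}) = 1/k_0$ and the $k$-term orthogonality replace the quadratic-character specifics.
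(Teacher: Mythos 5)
Your plan captures the right spirit but has two genuine gaps, one structural and one technical; each is resolved in the paper by a device you have not supplied.

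The structural gap is in the final comparison, which as written does not produce a contradiction. You bound $|\sA| = \Psi_{mq}(x,y)$ from below by roughly $\frac{\phi(mq)}{mq}\rho(u)x$, and you bound $\#\{n\le x:\chi(n)=1\}$ from above by $\frac{\phi(m)}{m}\frac{x}{k}+o(x)$ via orthogonality and Burgess. Since $\gcd(q,m)=1$, dividing out $\frac{\phi(m)}{m}x$ leaves $\frac{\phi(q)}{q}\rho(u)\le \frac{1}{k}+o(1)$. But when $\sP_0$ consists of the smallest primes one has $\frac{\phi(q)}{q}\asymp\frac{1}{\log m}$, so the left side tends to $0$ and there is nothing to contradict; observing $\frac{\phi(q)}{q}\gg\frac{1}{\log m}=m^{-o(1)}$, as you propose, does not rescue this, since $m^{-o(1)}$ can genuinely be $\asymp\frac{1}{\log m}$. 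The paper avoids the mismatch by applying the fundamental lemma of the sieve on the \emph{upper-bound} side: it sieves $\{n\le x:\gcd(n,m)=1,\ \chi(n)=1\}$ by the primes dividing $q$ up to a level $z^{2v}$, producing an upper bound $\approx(1+\beta)\frac{\phi(mq)}{mq}\frac{x}{k}$ plus a power-saving error. Then the $\frac{\phi(mq)}{mq}$ factor appears on both sides and cancels, and one is left comparing $\rho(u)$ with $\frac1k$ plus an error that is controlled by the trivial estimate $\frac{mq}{\phi(mq)}\ll\log\log(mq)\ll\log x$. (You invoke the sieve only in passing to assist the lower bound; there it is unnecessary—Tenenbaum's theorem alone gives the lower bound—and it is the upper bound that needs it.)

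The technical gap is the claim that Burgess--Norton gives $\sum_{n\le x}\chi^j(n)=o(x)$ for $x=m^{1/4+\delta}$ with $\delta$ small. Proposition \ref{prop:norton} carries a factor $R_k(m)^{1/r}=\min\{M(m)^{3/4},Q(k)^{9/8}\}^{1/r}$ that cannot be dropped for $r\ge4$, and this factor can be a fixed power of $m$ (for instance, $m=p^e$ with $e\ge3$ and $\chi$ of $p$-power order makes both $M(m)$ and $Q(k)$ comparable to $m$). In that case the bound is nowhere near $o(x)$ for $x=m^{1/4+\delta}$. Taking $r\le 3$ eliminates $R_k(m)$, but then $o(x)$ requires $x\gg m^{1/3}$, which is too large for your target exponent. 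The paper's resolution is the two-theorem case split (Theorems \ref{thm:fixedprime0} and \ref{thm:fixedprime1}): if the order $k$ is large enough that $3u_k>4u_{k_0}$, it uses the $r=3$ range (exponent $\frac{1}{3u_k}\le\frac{1}{4u_{k_0}}$, no $R_k(m)$), and if $k$ is smaller, then $Q(k)$ and hence $R_k(m)$ are $O_{k_0}(1)$, so the $R_k(m)$ factor is absorbed. Your proposal contains no mechanism for controlling $R_k(m)$, so the Burgess step fails for general $m$ and $\chi$.
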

\begin{rmk}\mbox{ }
\begin{itemize}
\item It follows readily from the definition that $\rho(u) = 1-\log{u}$ for $1 \le u \le 2$,  and so $u_2 =  e^{1/2} = 1.6487\ldots$ and $u_3 = e^{2/3} = 1.9477\ldots$. For $k > 3$, it does not seem that $u_k$ has a simple closed form expression.
\item Theorem \ref{thm:main} is the special case $k_0=2$ of Theorem \ref{thm:fixedprime}.
\end{itemize}
\end{rmk}

One might compare Theorem \ref{thm:main} for the quadratic character modulo a prime $p$ with a result of Banks--Garaev--Heath-Brown--Shparlinski \cite{BGHBS08}. They show that for each fixed $\varepsilon > 0$, and each $N \ge p^{1/4\sqrt{e}+\varepsilon}$, the proportion of quadratic nonresidues modulo $p$ in $[1,N]$ is $\gg_{\varepsilon} 1$ for all primes $p > p_0(\varepsilon)$.

Our arguments use the ideas of Vinogradov and Davenport--Erd\H{o}s but take advantage of modern developments in sieve methods and the theory of smooth numbers. A variant of the Burgess bounds developed by Norton also plays an important role. We note that an application of the sieve that is similar in spirit to ours appears in work of Bourgain and Lindenstrauss \cite[Theorem 5.1]{BL03}.\footnote{A special case of their result: \emph{Given $\varepsilon >0$, there is an $\alpha>0$ such that $\sum_{\substack{p^{\alpha} \le \ell \le p^{1/4+\varepsilon} \\ \leg{\ell}{p}=-1}}\frac{1}{\ell} > \frac{1}{2}-\varepsilon$, for all $p > p_0(\varepsilon)$.}}

It is equally natural to ask for small prime character \emph{residues}, i.e., primes $\ell$ with $\chi(\ell)=1$. The most significant unconditional result in this direction is due to Linnik and A.\,I. Vinogradov \cite{VL66}. They showed that if $\chi$ is the quadratic character modulo a prime $p$, then the smallest prime $\ell$ with $\chi(\ell)=1$ satisfies $\ell \ll_{\varepsilon} p^{1/4+\varepsilon}$. More generally, Elliott \cite{elliott71} proved that when $\chi$ has order $k$, the least such $\ell$ is $O_{k,\varepsilon}(p^{\frac{k-1}{4}+\epsilon})$. As Elliott notes, this bound is only interesting for small values of $k$; otherwise, it is inferior to what follows from known forms of Linnik's theorem on primes in progressions. For extensions of the Linnik--Vinogradov method in a different direction, see \cite{pollack14B, pollack14}.

Our final result is a partial analogue of Theorem \ref{thm:main} for prime residues of quadratic characters. Regrettably, the number of primes produced falls short of a fixed power of $m$.

\begin{thm}\label{thm:smallresidue} Let $\varepsilon > 0$ and let $A >0$. There is an $m_0=m_0(\varepsilon,A)$ with the following property: If $m > m_0$, and $\chi$ is a quadratic character modulo $m$, then there are at least $(\log{m})^{A}$ primes $\ell \le m^{\frac{1}{4}+\varepsilon}$ with $\chi(\ell)=1$.
\end{thm}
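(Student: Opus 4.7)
The plan parallels the proof of Theorem~\ref{thm:main}, with one new ingredient exploiting the quadratic nature of $\chi$: on integers $n$ whose prime factors are all $\chi$-nonresidues, $\chi(n) = (-1)^{\Omega(n)} = \lambda(n)$, where $\lambda$ denotes the Liouville function. Hence, any $n$ with $\chi(n) = 1$ but $\lambda(n) = -1$ must possess at least one prime factor $\ell$ with $\chi(\ell) = 1$. Producing many such $n$, spread across distinct prime factors, will yield the desired residue primes.

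Fix $X = m^{1/4+\varepsilon}$ and a smoothness parameter $y$ to be calibrated. As in the proof of Theorem~\ref{thm:main}, Norton's Burgess bound combined with Tenenbaum's theorem yields the smoothed character-sum estimate $\sum_{n \le X,\, P^+(n) \le y} \chi(n) = o(\Psi(X,y))$. Simultaneously, the Dirichlet series $\zeta(2s)/\zeta(s)$ and $L(s,\chi)/\zeta(2s)$ are both regular at $s=1$, so the smoothed prime-number-theorem-type bounds $\sum \lambda(n) = o(\Psi(X,y))$ and $\sum \chi(n)\lambda(n) = o(\Psi(X,y))$ follow (for $y$ in a suitable range). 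Expanding the indicator $\mathbf{1}[\chi(n)=1,\,\lambda(n)=-1] = \tfrac{1}{4}(1+\chi(n))(1-\lambda(n))$ (valid for $(n,m)=1$) and gathering terms, one obtains
\[
\#\bigl\{n \le X : P^+(n) \le y,\ \chi(n)=1,\ \lambda(n)=-1,\ (n,m)=1\bigr\} \,\sim\, \tfrac{1}{4}\,\tfrac{\phi(m)}{m}\,\Psi(X,y).
\]
To extract many distinct residue primes, I iterate: given residue primes $\ell_1,\ldots,\ell_k \le y$, restrict the count further to $n$ also coprime to $D_k := \ell_1 \cdots \ell_k$. The fundamental lemma of the sieve, together with Tenenbaum's smooth-coprimality theorem, delivers the expected main term $\tfrac{1}{4}\,\tfrac{\phi(m)}{m}\,\Psi(X,y)\,\prod_{i \le k}(1 - 1/\ell_i)$. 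So long as this dominates the sieve error, the coprimality condition forces a new residue prime factor $\ell_{k+1}\le y$.

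The main obstacle is calibrating $y$ so that the iteration can be sustained through $k = (\log m)^A$ steps. The level of distribution of the fundamental lemma constrains $D_k \le X^{\beta}$ for some $\beta = \beta(\varepsilon) < 1$; since each $\ell_i \le y$, this forces $k \le \beta \log X/\log y$. Accommodating $k = (\log m)^A$ therefore requires $\log y$ quite small (on the scale $(\log m)^{1-A-o(1)}$), while $\Psi(X,y)$ must simultaneously remain large enough to swallow the sieve error as well as the Mertens factor $\prod(1-1/\ell_i) \gg 1/\log k$. This tension is precisely what is relaxed by the weaker target $(\log m)^A$---as opposed to the $m^{\kappa}$ of Theorem~\ref{thm:main}---and it is where the bulk of the bookkeeping in the proof will go. For larger $A$ beyond the regime handled by a naive one-prime-at-a-time iteration, the scheme is augmented by extracting many residue primes at once from a single Burgess-plus-Tenenbaum estimate via a higher-dimensional sieve.
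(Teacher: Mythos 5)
Your approach is genuinely different from the paper's, but it contains a gap that I believe is fatal, not merely a matter of bookkeeping. The paper uses the nonnegativity of $r(n)=\sum_{d\mid n}\chi(d)$ together with the Linnik--Vinogradov/Burgess estimate $\sum_{n\le x}r(n)=L(1,\chi)x+O(x^{1-\eta})$ (Proposition~\ref{prop:LV}), shows that if the residue primes up to $x$ are few then $r$ is supported on a sparse set so that $\sum r(n)\ll x^{1-\eta'/2}$, and reaches a contradiction with Siegel's theorem. Siegel enters as an external, unconditional black box.

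The central problem with your plan is the claimed estimate $\sum_{n\le X,\,P^+(n)\le y}\chi(n)\lambda(n)=o(\Psi(X,y))$. That sum is \emph{not} small in general; in fact it is essentially equivalent to the conclusion you are trying to prove. Observe that $\chi\lambda$ is completely multiplicative with $\chi\lambda(p)=-\chi(p)$, so $\chi\lambda(p)=+1$ for every nonresidue prime and $\chi\lambda(p)=-1$ for every residue prime. If (as in the hypothesis you would want to contradict) there are very few residue primes $\le y$, then $\chi\lambda(n)=1$ for the overwhelming majority of the $y$-smooth $n\le X$ coprime to $m$, and $\sum\chi(n)\lambda(n)\approx\Psi_m(X,y)$ rather than $o(\Psi(X,y))$. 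Plugging this into your expansion $\tfrac14\bigl(\Psi_m-\sum\lambda+\sum\chi-\sum\chi\lambda\bigr)$ simply returns a main term near $0$, which is consistent with there being few residue primes and produces no contradiction and no lower bound. In Halász language, $\chi\lambda$ ``pretends'' to be $1$ exactly when residue primes are sparse, so its mean value cannot be assumed small without first controlling the residue primes. (Also, a minor slip: the Dirichlet series of $\chi\lambda$ is $\zeta(2s)/L(s,\chi)$ up to Euler factors at $p\mid m$, not $L(s,\chi)/\zeta(2s)$, and regularity of a Dirichlet series at $s=1$ does not by itself give $o(X)$ for its partial sums.) To repair this you would need to feed in some form of Siegel's theorem to obtain quantitative cancellation in $\sum\chi\lambda$, at which point you are implicitly reconstructing the arithmetic input the paper extracts more cleanly via $L(1,\chi)$.

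A secondary issue, which you partly flag yourself, is the iteration. Because each extracted prime $\ell_i$ can be as large as $y$, the level constraint $D_k=\prod_{i\le k}\ell_i\le X^\beta$ cannot in general sustain $k=(\log m)^A$ steps: even in the best case $\ell_i\asymp i\log i$ one only reaches $k\ll\log X/\log\log X$, and for $A\ge 1$ this is already too small; in the worst case $\ell_i\asymp y$ one gets only $O(1)$ steps when $y$ is a power of $X$. The closing sentence about ``extracting many residue primes at once via a higher-dimensional sieve'' does not indicate how to overcome this, and a higher-dimensional sieve does not obviously help since the obstruction is the size of the sieving modulus, not the dimension.

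For comparison, the paper avoids both difficulties at once: positivity of $r(n)$ removes the need for any cancellation estimate of the $\chi\lambda$ type, the support of $r$ is analyzed combinatorially (squarefree part dividing $mq$, squarefull part, and a de Bruijn smooth-number bound), and Siegel's theorem supplies the needed lower bound with no iteration at all.
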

Results of the sort proven here have direct consequences for prime splitting in cyclic extensions of $\Q$. For example, Theorem \ref{thm:main} (respectively Theorem \ref{thm:smallresidue}) implies that there are more than $|\Delta|^{\kappa}$ inert (respectively, more than $(\log|\Delta|)^{A}$ split) primes $p \le |\Delta|^{\frac{1}{4\sqrt{e}}+\varepsilon}$ (respectively, $p \le |\Delta|^{\frac{1}{4}+\varepsilon}$) in the quadratic field of discriminant $\Delta$, as soon as $|\Delta|$ is large enough in terms of $\varepsilon$ (and $A$).

\section{Small prime nonresidues: Proofs of Theorems \ref{thm:main} and \ref{thm:fixedprime}}

\subsection{Preparation} As might be expected, the Burgess bounds play the key role in our analysis. The following version is due to Norton (see \cite[Theorem 1.6]{norton98}).

\begin{prop}\label{prop:norton} Let $\chi$ be a nontrivial character modulo $m$ of order dividing $k$. Let $r$ be a positive integer,
and let $\epsilon > 0$. For all $x > 0$,
\[ \sum_{n \le x} \chi(n) \ll_{\epsilon,r} R_k(m)^{1/r} x^{1-\frac{1}{r}} m^{\frac{r+1}{4r^2}+\epsilon}. \]
Here
\[ R_k(m) = \min\left\{M(m)^{3/4},Q(k)^{9/8}\right\}, \]
where
\[ M(m) = \prod_{p^e \parallel m,~e\ge 3} p^e \qquad\text{and}\quad Q(k) = \prod_{p^e \parallel k,~e\ge 2} p^e. \]
The factor of $R_k(m)^{1/r}$ can be omitted if $r \le 3$.
\end{prop}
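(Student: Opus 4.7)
The plan is to follow the Burgess method, with Norton's refinements tracking the effect of prime-power divisors of $m$ and $k$. First I would reduce to the case of the primitive character $\chi^*$ modulo the conductor $f \mid m$ inducing $\chi$, using Möbius inversion to remove the condition $\gcd(n,m)=1$; the difference can be absorbed in the error term.

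The main Burgess step runs as follows. For parameters $A, B$ with $AB$ much smaller than $x$, one has, for each $1 \le a \le A$ and $1 \le b \le B$, the shift identity
\[ \sum_{n \le x} \chi(n) = \sum_{n \le x} \chi(n+ab) + O(AB). \]
Averaging over $a, b$, substituting $n = bn' + c$ to absorb $b$ into the character argument, and then applying Hölder's inequality with exponent $2r$, I would reduce the problem to controlling the $2r$-th moment
\[ M_{2r} := \sum_{n \bmod m} \Bigl|\sum_{a \le A} \chi(n+a)\Bigr|^{2r}. \]
Expanding the modulus and separating diagonal tuples yields a sum of complete character sums of the shape $\sum_{n \bmod m} \chi\bigl(\prod_{i=1}^{r}(n+a_i)(n+b_i)^{k-1}\bigr)$, where $(a_i, b_i)$ ranges over $2r$-tuples from $\{1,\ldots,A\}$.

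The heart of the matter is bounding these complete sums. Factoring $m = m_0 M(m)$ so that $m_0$ has no cube divisors, the multiplicativity of $\chi$ splits the sum into a product over prime powers $p^e \parallel m$. For $e \le 2$, the Weil bound on character sums of rational functions over $\mathbf{F}_p$ gives the square-root saving $p^{e/2}$ provided the argument is not an essential $k$-th power. For $e \ge 3$, where the Weil bound fails, I would invoke the Postnikov-type expansion: on cosets modulo a suitable power of $p$, the character $\chi$ can be written via the $p$-adic logarithm as an additive character, and the resulting exponential sum is bounded by standard stationary-phase reasoning, delivering a local factor of size $p^{3e/4}$ and hence the global factor $M(m)^{3/4}$. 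The parallel device, writing $\chi = \psi^k$ and analyzing the dependence on the powerful part of $k$, yields the competing factor $Q(k)^{9/8}$; taking the minimum gives $R_k(m)$. Finally, optimizing $A$ and $B$ in the original shift averaging converts the moment estimate into the claimed bound $R_k(m)^{1/r} x^{1-1/r} m^{(r+1)/(4r^2)+\epsilon}$; for $r \le 3$ the diagonal in $M_{2r}$ already dominates, so no $R_k(m)^{1/r}$ factor is needed.

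The hard part is the Postnikov analysis for $e \ge 3$: the $p$-adic logarithm expansion must be carried out uniformly over all the tuples $(a_i, b_i)$ without losing the sharp exponents $3/4$ and $9/8$, and the resulting local estimates must be patched together via the Chinese remainder theorem in a way compatible with the archimedean averaging. This is precisely the refinement where Norton's version improves on the off-the-shelf Burgess bound, and it is the delicate point that needs most care.
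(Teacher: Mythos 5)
The paper does not prove this proposition: it is quoted verbatim from Norton's work (see \cite[Theorem 1.6]{norton98}), so there is no in-paper argument for you to match against. Your sketch does trace the broad shape of the Burgess--Norton method --- shift averaging, H\"older to a $2r$-th moment of complete short-interval character sums, Weil bounds at the squarefree level, with the powerful parts of $m$ and of $k$ tracked separately --- and that is a fair reconstruction of the strategy Norton's paper actually carries out.

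Two specific points in your outline would not survive closer inspection. First, your stated reason that ``for $r \le 3$ the diagonal in $M_{2r}$ already dominates, so no $R_k(m)^{1/r}$ factor is needed'' is not correct: at the optimal choice of $A$ the diagonal and off-diagonal contributions are of the same order, by design. The actual reason the factor can be dropped for $r \le 3$ is that in that range the relevant moment estimate is available by elementary counting of solutions to the associated polynomial congruences, without invoking Weil-quality cancellation at each prime power; since the Weil/Postnikov machinery is never needed, the cube-free restriction (and hence the $M(m)^{3/4}$ penalty) never arises. Second, the Postnikov-type expansion as you describe it is a gesture toward the right technique, not an argument: it is far from clear from your sketch that a $p$-adic logarithm followed by stationary phase delivers exactly the local exponent $3/4$ for $M(m)$ and the global exponent $9/8$ for $Q(k)$, and you acknowledge as much. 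Also note that the Weil bound lives over $\mathbf{F}_p$, so even your $e=2$ case already requires a lifting argument rather than a direct appeal to Weil, as you suggest. Since the paper simply cites Norton, the real gap here is the full content of \cite{norton98}, which neither you nor the paper attempts to reproduce; your sketch captures its skeleton but not the parts that make it work.
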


Another crucial tool is a theorem of Tenenbaum concerning the distribution of smooth numbers satisfying a coprimality condition. For $x\ge y\ge 2$, let \[ \Psi_q(x,y) = \#\{n\le x: \gcd(n,q)=1, P^{+}(q) \le y\}. \]

\begin{prop}\label{prop:FT} For positive integers $q$ and real numbers $x, y$ satisfying
\[ P^{+}(q) \le y \le x \quad\text{and}\quad \omega(q) \le y^{1/\log(1+u)}, \]
we have
\[ \Psi_q(x,y) = \frac{\phi(q)}{q} \Psi(x,y) \left(1+O\left(\frac{\log(1+u) \log(1+\omega(q))}{\log{y}}\right)\right). \]
As before, $u$ denotes the ratio $\log{x}/\log{y}$.
\end{prop}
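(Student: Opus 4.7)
The plan is to combine Möbius inversion on the coprimality condition with a precise shift estimate for $\Psi$.

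First I would reduce harmlessly to the case where $q$ is squarefree and $y$-smooth, since $\gcd(n,q)=1$ depends only on $\mathrm{rad}(q)$ and any prime divisor of $q$ exceeding $y$ cannot divide a $y$-smooth $n$. With $q$ squarefree, the identity $\1_{\gcd(n,q)=1} = \sum_{d \mid \gcd(n,q)} \mu(d)$ and an interchange of summations give
$$ \Psi_q(x,y) = \sum_{d \mid q} \mu(d)\, \Psi(x/d, y). $$
Were $\Psi(x/d, y)$ exactly $\Psi(x,y)/d$, this would collapse at once to $\frac{\phi(q)}{q}\Psi(x,y)$, so the task reduces to controlling the deviation uniformly in $d \mid q$.

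The analytic workhorse would be the Hildebrand--Tenenbaum saddle-point approximation $\Psi(x/d, y) = d^{-\alpha}\Psi(x,y)(1 + O(1/\log y))$, where $\alpha = \alpha(x,y)$ is the saddle-point parameter and, in the relevant ranges, $1 - \alpha \ll \log(1+u)/\log y$. Substituting and summing over squarefree $d \mid q$ converts the main term into $\Psi(x,y)\prod_{p \mid q}(1 - p^{-\alpha})$, and the remaining job is to compare this with $\Psi(x,y)\prod_{p \mid q}(1 - 1/p) = \frac{\phi(q)}{q}\Psi(x,y)$. Taking the logarithm of the ratio of products and linearizing in $1-\alpha$, the discrepancy is controlled by $(1-\alpha)\sum_{p \mid q}\log p/(p-1)$; a Mertens-style bound over the $\omega(q)$ primes of $q$ majorizes this sum by $O(\log(1+\omega(q)))$, producing the claimed error $O(\log(1+u)\log(1+\omega(q))/\log y)$.

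The main obstacle, in my view, is arranging things so that $\log(1+\omega(q))$ rather than $\omega(q)$ appears in the final error. A termwise bound on the inclusion-exclusion loses a factor $2^{\omega(q)}$ and is hopeless; the logarithmic saving can only be extracted by handling the Euler product $\prod_{p \mid q}(1 - p^{-\alpha})/(1 - p^{-1})$ as a whole and then invoking Mertens, not divisor by divisor. The hypothesis $\omega(q) \le y^{1/\log(1+u)}$ is precisely what keeps the shift estimate uniform across the range of $d$ involved and guarantees that the final error is genuinely subordinate to the main term.
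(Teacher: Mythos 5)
Your plan has a genuine gap at the point where you substitute the shift estimate into the inclusion--exclusion sum. If $\Psi(x/d,y) = d^{-\alpha}\Psi(x,y)\bigl(1 + O(1/\log y)\bigr)$, then
\[
\Psi_q(x,y) = \Psi(x,y)\sum_{d\mid q}\mu(d)\,d^{-\alpha} \;+\; O\!\left(\frac{\Psi(x,y)}{\log y}\sum_{d\mid q}d^{-\alpha}\right).
\]
The signed sum does collapse to $\Psi(x,y)\prod_{p\mid q}(1-p^{-\alpha})$, but the accumulated error is $\ll \frac{\Psi(x,y)}{\log y}\prod_{p\mid q}(1+p^{-\alpha})$, and the ratio
\[
\prod_{p\mid q}\frac{1+p^{-\alpha}}{1-p^{-\alpha}} \;=\; \exp\Bigl(2\sum_{p\mid q}p^{-\alpha}+O\bigl(\textstyle\sum_{p\mid q}p^{-2\alpha}\bigr)\Bigr)
\]
is far from bounded: when $q$ is divisible by the first several primes and $\alpha$ is bounded away from $1$, the sum $\sum_{p\mid q}p^{-\alpha}$ is a positive power of $\omega(q)$, so the error term swamps the main term by a superpolynomial factor in $\omega(q)$ --- nowhere near the $\log(1+\omega(q))$ that is needed. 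You rightly observe that a termwise bound loses $2^{\omega(q)}$ and is hopeless, but your proposed repair (compare $\prod(1-p^{-\alpha})$ with $\prod(1-1/p)$ and invoke Mertens) only treats the discrepancy between the two \emph{main} terms; it does nothing to control the remainder accumulated across the $2^{\omega(q)}$ divisors. The signs $\mu(d)$ cannot rescue this either, since the shift estimate yields only a bound on $|E_d|$, not its sign or a usable expansion.

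For what it's worth, the paper itself does not prove this proposition; it simply cites the main theorem of Tenenbaum's \emph{Cribler les entiers sans grand facteur premier} in the case $A=1$. Tenenbaum's argument sidesteps the divisor expansion entirely: one works with the Dirichlet series $\prod_{p\le y,\,p\nmid q}(1-p^{-s})^{-1} = \zeta(s,y)\prod_{p\mid q}(1-p^{-s})$ and runs the saddle-point machinery once on this perturbed Euler product, so the factor $\prod_{p\mid q}(1-p^{-s})$ is carried along as a smooth multiplier evaluated near $s=\alpha$, never expanded into $2^{\omega(q)}$ pieces. Your instinct that the key comparison is between $\prod_{p\mid q}(1-p^{-\alpha})$ and $\varphi(q)/q$, linearized in $1-\alpha\asymp \log(1+u)/\log y$ and bounded by Mertens, is exactly right and does appear in the actual proof; it is the inclusion--exclusion scaffolding you built around it that cannot deliver the stated error term.
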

\begin{proof} This is the main result of \cite{tenenbaum93} in the case $A=1$.\end{proof}

\begin{remark} If $q'$ is the largest divisor of $q$ supported on the primes not exceeding $y$, then $\Psi_{q}(x,y) = \Psi_{q'}(x,y)$. So the assumption in Proposition \ref{prop:FT} that $P^{+}(q) \le y$ does not entail any loss of generality.
\end{remark}

Theorem \ref{thm:fixedprime} will be deduced from two variant results claiming weaker upper bounds.

\begin{thm}\label{thm:fixedprime0} Let $\varepsilon >0$ and $k_0 \ge 2$. There are numbers $m_0(\varepsilon,k_0)$ and $\kappa = \kappa(\varepsilon,k_0) > 0$ for which the following holds: For all $m > m_0$ and each nontrivial character $\chi$ mod $m$ of order $k \ge k_0$, there are more than $m^{\kappa}$ prime $\chi$-nonresidues not exceeding $m^{\frac{1}{3u_{k_0}}+\varepsilon}$.\end{thm}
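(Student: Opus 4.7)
My plan is to argue by contradiction, broadly following Vinogradov's sieve together with Davenport--Erd\H{o}s's treatment of higher-order characters. Set $y = m^{1/(3u_{k_0})+\varepsilon}$ and $x = y^u$, where $u < u_{k_0}$ is chosen close enough to $u_{k_0}$ that $\rho(u) > 1/k_0$ strictly while $x \ge m^{1/3+\eta}$ for some positive $\eta = \eta(\varepsilon, k_0)$; this is feasible because $u_{k_0}(1/(3u_{k_0}) + \varepsilon) = 1/3 + u_{k_0}\varepsilon$ exceeds $1/3$. Assume, toward a contradiction, that the set $\mathcal{N}$ of prime $\chi$-nonresidues $\le y$ satisfies $|\mathcal{N}| \le m^\kappa$ for some small $\kappa > 0$ to be chosen, and write $N = \prod_{\ell \in \mathcal{N}} \ell$.

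The two estimates I would combine are an upper bound on $\#\{n\le x: \chi(n)=1,\ \gcd(n,m)=1\}$ and a lower bound on the count of $y$-smooth $n \le x$ coprime to $mN$ (each of which has $\chi(n) = 1$, since all its prime factors are residues and it is coprime to $m$). For the upper bound, apply Proposition \ref{prop:norton} with $r=3$---so the $R_k(m)^{1/r}$ factor may be dropped---to each nontrivial character $\chi^j$ ($1 \le j < k$); with our choice of $x$ this gives $|\sum_{n\le x}\chi^j(n)| \ll x\,m^{-\delta}$ for some $\delta > 0$, and orthogonality then yields
\[
\#\{n \le x: \chi(n)=1,\ \gcd(n,m)=1\} = \frac{\phi(m)}{km}\,x + O(x m^{-\delta}).
\]
For the lower bound, apply Proposition \ref{prop:FT} to $mN$ (or rather to its part supported on primes $\le y$, as allowed by the remark following that proposition); the hypothesis $\omega(mN) \le y^{1/\log(1+u)}$ holds once $\kappa$ is small enough, giving
\[
\Psi_{mN}(x,y) \sim \frac{\phi(m)}{m}\prod_{\ell \in \mathcal{N}}\!\left(1-\tfrac{1}{\ell}\right)\cdot x\rho(u).
\]

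Comparing the two estimates and using $\rho(u) > 1/k_0 \ge 1/k$ yields $\prod_{\ell \in \mathcal{N}}(1 - 1/\ell) \le 1/(k\rho(u)) < 1$, hence $\sum_{\ell \in \mathcal{N}} 1/\ell$ is bounded below by a positive constant depending only on $\varepsilon$ and $k_0$. The main obstacle I anticipate is converting this reciprocal-sum lower bound into the count bound $|\mathcal{N}| > m^\kappa$: the inequality in its present form is consistent with $\mathcal{N}$ consisting of a bounded number of very small primes. I expect this to be the central technical difficulty, to be handled by running a refined version of the Tenenbaum step in which the sieved $n$'s are additionally required to have no prime factor below $z = m^\kappa$, and absorbing the resulting Mertens-type factor into the margin $\rho(u) - 1/k_0 > 0$; the comparison then forces $\mathcal{N} \cap (m^\kappa, y]$ itself to have the claimed polynomial size. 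The chief technical task is thus establishing the appropriate analog of Proposition \ref{prop:FT} when an additional lower bound on the smallest prime factor is imposed.
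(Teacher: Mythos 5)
Your comparison up through the conclusion $\prod_{\ell\in\mathcal{N}}(1-1/\ell)<1-c$ is sound, and you have correctly diagnosed the genuine gap: a lower bound on $\sum_{\ell\in\mathcal{N}}1/\ell$ is compatible with $\mathcal{N}$ consisting of a bounded number of tiny primes, so by itself it does not give $|\mathcal{N}|>m^\kappa$. However, the fix you propose is not the route the paper takes, and as sketched it is both undeveloped and in some tension with your own hypotheses. If you impose a lower bound $P^-(n)>z$ with $z=m^\kappa$ while assuming $|\mathcal{N}|\le m^\kappa$, the discarded factor $\prod_{\ell\in\mathcal{N},\,\ell>z}(1-1/\ell)$ is only controlled by $\exp(-\sum_{\ell>z}1/\ell)\ge\exp(-|\mathcal{N}|/z)\ge e^{-1}$, which is not close to $1$; you would need $z$ substantially larger than the assumed bound on $|\mathcal{N}|$, together with a version of Tenenbaum's theorem for integers with prime factors confined to an interval $(z,y]$. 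That variant is a nontrivial result not supplied by Proposition~\ref{prop:FT}, and it is the ``chief technical task'' you flag but do not carry out.

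The paper sidesteps this entirely by putting the coprimality-to-$q$ condition into \emph{both} sides of the comparison, so that the factor $\phi(mq)/mq$ cancels. Concretely, let $q$ be the product of the prime $\chi$-nonresidues in $[1,y]$ and consider $\sum_{n\le x,\ \gcd(n,mq)=1}\bigl(1+\chi(n)+\cdots+\chi(n)^{k-1}\bigr)$. The lower bound is exactly your Tenenbaum step: this sum is at least $k\,\Psi_{mq}(x,y)\ge\bigl(1+\tfrac{2k}{3}\eta\bigr)\frac{\phi(mq)}{mq}x$. The upper bound, however, is obtained \emph{not} from orthogonality alone but from the fundamental lemma of the sieve applied to $\sA=\{n\le x:\gcd(n,m)=1,\ \chi(n)=1\}$, sieving out the primes dividing $q$ below a level $z$; the level-of-distribution input is the Burgess estimate applied to the congruence sums $\sum_{n\le x,\ d\mid n}\chi^j(n)$. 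This produces an upper bound $(1+\beta)\frac{\phi(mq)}{mq}x+O(kx^{1-\eta'})$, and crucially the tail $\prod_{p\mid q,\ p\ge z}(1-1/p)^{-1}$ is controlled by $\exp(2\omega(q)/z)$, which is $1+o(1)$ once $\kappa'$ is taken small relative to the exponent of $z$. Comparing and cancelling $\frac{\phi(mq)}{mq}x$ gives $\frac{mq}{\phi(mq)}\gg x^{\eta'}$, while on the other hand $\frac{mq}{\phi(mq)}\ll\log\log(mq+2)\ll\log x$ whenever $\omega(q)\le x^{\kappa''}$; this is a contradiction for large $x$, and the count bound $\omega(q)>x^{\kappa''}$ falls out directly. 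In short: the gap you found is the real one, but the resolution is to sieve the \emph{upper} bound rather than to restrict the lower bound, which keeps the analysis within the tools already stated (Proposition~\ref{prop:norton}, Proposition~\ref{prop:FT}, the fundamental lemma) and avoids inventing a new smooth-number estimate.
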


\begin{thm}\label{thm:fixedprime1} Let $\varepsilon >0$ and $k_0 \ge 2$. There are numbers $m_0(\varepsilon,k_0)$ and $\kappa = \kappa(\varepsilon,k_0) > 0$ for which the following holds: For all $m > m_0$ and each nontrivial character $\chi$ mod $m$ of order $k \ge k_0$, there are more than $m^{\kappa}$ prime $\chi$-nonresidues not exceeding $R_k(m) m^{\frac{1}{4u_{k_0}}+\varepsilon}$. Here $R_k(m)$ is as defined in Proposition \ref{prop:norton}.
\end{thm}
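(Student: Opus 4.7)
The plan is to mimic the proof of Theorem \ref{thm:fixedprime0} but to invoke Proposition \ref{prop:norton} with an arbitrarily large rank $r = r(\varepsilon,k_0)$ instead of $r = 3$. With $r > 3$, the factor $R_k(m)^{1/r}$ appearing in Proposition \ref{prop:norton} can no longer be dropped, and this is precisely what enlarges the admissible range for the primes $\ell$ by the extra factor of $R_k(m)$ in the theorem's bound.

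Argue by contradiction: suppose $\chi$ has order $k \ge k_0$ modulo $m$ and that the set $\mathcal N$ of prime $\chi$-nonresidues $\ell \le y := R_k(m)\,m^{1/(4u_{k_0})+\varepsilon}$ has $|\mathcal N| \le m^\kappa$, for a sufficiently small $\kappa > 0$ to be chosen. Set $Q = \prod_{\ell \in \mathcal N} \ell$ and take $x = y^u$ with $u$ a bit below $u_{k_0}$, so that $\rho(u) > 1/k_0 + \delta$ for some fixed $\delta > 0$. Proposition \ref{prop:FT}, applied to the product of primes $\le y$ that divide $m$ or lie in $\mathcal N$ (a product whose $\omega$ is at most $m^\kappa + O(\log m/\log\log m) < y^{1/\log(1+u)}$ once $\kappa$ is small enough), yields
\[
\Psi_{mQ}(x,y) \sim \frac{\phi(m)}{m}\,\frac{\phi(Q)}{Q}\,x\rho(u).
\]
Every $n$ counted here has all prime factors in the ``residue set'' (primes $\ell \le y$ with $\chi(\ell)=1$), so $\chi(n)=1$ and
\[
\Psi_{mQ}(x,y) = \sum_{\substack{n \le x,\ P^+(n) \le y \\ \gcd(n,Q) = 1}} \chi(n).
\]

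Now I would follow the same decomposition as in the proof of Theorem \ref{thm:fixedprime0}: expand the coprimality condition by M\"obius inversion over divisors of $Q$ and peel off the smoothness condition by Buchstab / the fundamental lemma of the sieve, truncated at a small power of $x$, thereby reducing the character sum on the right to a short linear combination of unrestricted character sums $\sum_{n' \le x/d}\chi(n')$ together with a controlled sieve remainder. Applying Proposition \ref{prop:norton} with the chosen rank $r$ to each of these bounds the whole expression by
\[
\ll R_k(m)^{1/r}\,x^{1 - 1/r}\,m^{\frac{r+1}{4r^2} + \varepsilon'} \cdot (\log m)^{O_{\varepsilon,k_0}(1)}.
\]
Substituting $y = R_k(m)\,m^{1/(4u_{k_0})+\varepsilon}$ and $x = y^u$, one checks that for $r$ large and $\varepsilon',\kappa$ small in terms of $\varepsilon$ and $k_0$, this is $o\bigl(\Psi_{mQ}(x,y)\bigr)$, contradicting the lower bound above.

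The main obstacle is the decomposition step: choosing the sieve truncation, the Burgess rank $r$, the slack $u_{k_0}-u$, and the exponent $\kappa$ in a mutually compatible way so that (i) the density factor $\phi(Q)/Q$ does not destroy Tenenbaum's main term, (ii) the sieve remainder is negligible compared with $\Psi_{mQ}(x,y)$, and (iii) the $R_k(m)^{1/r}$ arising from Burgess--Norton is absorbed exactly by the factor $R_k(m)$ put into the definition of $y$. Since this is the same structural argument that drives the proof of Theorem \ref{thm:fixedprime0}, I do not expect qualitatively new difficulty beyond tracking the parameter $r$ through the estimates.
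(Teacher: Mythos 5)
Your high-level plan has the right contours: use Tenenbaum (Proposition~\ref{prop:FT}) to produce a lower bound for a count of smooth numbers coprime to $mQ$, apply Norton's Burgess variant with a large rank $r$ so that the exponent in $m^{(r+1)/(4r^2)}$ drops below the exponent of $m$ in $x$, and absorb the $R_k(m)^{1/r}$ factor into the $R_k(m)$ that was put in the statement for just this purpose. That is indeed what is happening in the paper, and your instinct that the argument tracks the proof of Theorem~\ref{thm:fixedprime0} with $r$ large in place of $r=3$ is essentially correct.

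However, the middle step of your plan contains a genuine gap. The identity
\[
\Psi_{mQ}(x,y) \;=\; \sum_{\substack{n \le x,\ P^+(n)\le y \\ \gcd(n,Q)=1}} \chi(n)
\]
is true but tautological: every summand is $0$ (when $\gcd(n,m)>1$) or $1$ (otherwise), so the right side has no cancellation whatsoever. You then propose to show this sum is $o\bigl(\Psi_{mQ}(x,y)\bigr)$ by expanding the coprimality condition and "peeling off" the smoothness restriction with the sieve, landing on unrestricted character sums estimable by Proposition~\ref{prop:norton}. That cannot work: you would be proving a positive quantity is $o$ of itself. Moreover, "peeling off" $P^+(n)\le y$ by the fundamental lemma is not legitimate---that would require sieving out \emph{all} primes in $(y, x]$, which is far outside the regime where the fundamental lemma applies with a usable error term, and even formally, the inner sums that remain after M\"obius over $Q$ are character sums restricted to $y$-smooth integers, which Burgess does not control.

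What your argument is missing is the identity $1 + \chi(n) + \cdots + \chi(n)^{k-1} = k\cdot \1\{\chi(n)=1\}$ (for $\gcd(n,m)=1$), which is the engine of the paper's proof. The paper estimates
\[
\sum_{\substack{n\le x \\ \gcd(n,mq)=1}} \bigl(1+\chi(n)+\cdots+\chi(n)^{k-1}\bigr)
\]
in two ways. From below: it is at least $k\,\Psi_{mq}(x,y)$ (since smooth $n$ coprime to $mq$ satisfy $\chi(n)=1$), and Tenenbaum plus $\rho(u)>1/k_0$ make this exceed $(1 + \tfrac{2k}{3}\eta)\frac{\phi(mq)}{mq}x$. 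From above: one sieves the set $\sA = \{n\le x : \gcd(n,m)=1,\, \chi(n)=1\}$ by the primes dividing $q$, \emph{with no smoothness restriction}. Crucially, for squarefree $d\mid q$, the congruence sum $\sum_{n\in\sA,\,d\mid n}1 = \tfrac1k \sum_{n\le x, d\mid n, (n,m)=1}(1+\chi+\cdots+\chi^{k-1})$ has a main term from $j=0$ and genuinely cancelling character sums from $j\ge 1$; it is to \emph{these} unrestricted tails that Proposition~\ref{prop:norton} is applied. The sieve then gives an upper bound of roughly $(1+\beta)\frac{\phi(mq)}{mq}x$, and comparing with the lower bound forces $\omega(q) > x^{\kappa''}$. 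Your decomposition never introduces the factor $1/k$ in the upper bound, and never creates a character sum with cancellation, so the contradiction you want will not materialize. You should rework the argument around the $1+\chi+\cdots+\chi^{k-1}$ identity before the rest of your parameter bookkeeping (which is sound) can be made to pay off.
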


The proof of Theorem \ref{thm:fixedprime1} is given in detail in the next section. We include only a brief remark about the proof of Theorem \ref{thm:fixedprime0}, which is almost entirely analogous (but slightly simpler). We then present the derivation of Theorem \ref{thm:fixedprime} from Theorems \ref{thm:fixedprime0} and \ref{thm:fixedprime1}. We remind the reader that Theorem \ref{thm:main} is the special case $k_0=2$ of Theorem \ref{thm:fixedprime}.

\subsection{Proof of Theorem \ref{thm:fixedprime1}}\label{sec:proofs} We let $\chi$ be a nontrivial character modulo $m$ of order $k \ge k_0$, where $k_0 \ge 2$ is fixed. With $\delta \in (0,\frac{1}{4})$, we set
\[ x= R_k(m) \cdot m^{\frac14 + \delta}, \quad y = x^{\frac{1}{u_{k_0}} + \delta}. \]
To prove Theorem \ref{thm:fixedprime1}, it suffices to show that for all large $m$ (depending only on $k_0$ and $\delta$), there are at least $x^{\kappa}$ prime $\chi$-nonresidues in $[1,y]$ for a certain constant $\kappa = \kappa(k_0,\delta) > 0$.

Let $q$ be the product of the prime $\chi$-nonresidues in $[1,y]$. Note that $\gcd(q,m)=1$, from the definition of a $\chi$-nonresidue.
Our strategy is to estimate
\begin{equation}\label{eq:different} \sum_{\substack{n \le x \\ \gcd(n,mq)=1}} (1+\chi(n) + \chi^2(n) + \dots + \chi^{k-1}(n)) \end{equation}
in two different ways.

We first derive a lower bound on \eqref{eq:different}, under the assumption that there are not so many prime $\chi$-nonresidues in $[1,y]$.

\begin{lem}\label{lem:lower} There are constants $\eta = \eta(\delta,k_0) > 0$, $\kappa =\kappa(\delta,k_0) > 0$, and $m_0 = m_0(\delta,k_0)$ with the following property: If $m > m_0$ and $\omega(q) \le x^{\kappa}$, then
\[ \sum_{\substack{n \le x \\ \gcd(n,mq)=1}} (1+\chi(n) + \dots +\chi(n)^{k-1}) \ge \left(1+\frac{2k}{3}\eta\right) \frac{\phi(mq)}{mq} x. \]
\end{lem}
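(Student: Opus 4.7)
The plan is to discard all but the $y$-smooth contribution to the outer sum and estimate that via Tenenbaum's formula (Proposition \ref{prop:FT}). Because $\chi$ has order exactly $k$, for any $n$ coprime to $m$ the inner sum $\sum_{j=0}^{k-1}\chi^j(n)$ equals $k$ if $\chi(n)=1$ and $0$ otherwise; in particular it is nonnegative, so restricting the outer sum to $y$-smooth $n$ yields a valid lower bound. The key observation is that every $y$-smooth $n$ coprime to $mq$ automatically satisfies $\chi(n)=1$: each prime $p\mid n$ has $p\le y$ and is coprime to both $m$ (so $\chi(p)\ne 0$) and $q$ (so $p$ is not a $\chi$-nonresidue in $[1,y]$), forcing $\chi(p)=1$ and hence $\chi(n)=1$. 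So the sum in the lemma is bounded below by $k\,\Psi_{mq}(x,y)$.

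By the Remark following Proposition \ref{prop:FT}, $\Psi_{mq}(x,y) = \Psi_Q(x,y)$, where $Q$ denotes the $y$-smooth part of $mq$. Applying Proposition \ref{prop:FT} to $Q$ together with Hildebrand's asymptotic $\Psi(x,y)\sim x\rho(u)$ (applicable because $y$ is a fixed positive power of $x$) gives
\[ \Psi_Q(x,y) = (1+o(1))\,\tfrac{\phi(Q)}{Q}\,x\,\rho(u). \]
Every prime factor of $mq$ not dividing $Q$ exceeds $y$, so $\phi(Q)/Q \ge \phi(mq)/mq$, and consequently the sum in the lemma is at least $(1-o(1))\,k\rho(u)\,\tfrac{\phi(mq)}{mq}\,x$.

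By construction, $u = \log x/\log y = 1/(1/u_{k_0}+\delta) < u_{k_0}$, and strict monotonicity of $\rho$ then yields $\rho(u) = 1/k_0 + c$ for some $c = c(\delta,k_0) > 0$. Since $k\ge k_0$, one gets $k\rho(u) \ge 1+kc$. Setting $\eta := c$, it suffices to ensure that $(1-o(1))k\rho(u) \ge 1 + \tfrac{2k}{3}\eta$; this reduces to a bound on the $o(1)$ term that is uniform in $k \ge k_0$, the most stringent case being $k=k_0$ (where the slack $kc$ is smallest). Taking $m$ large enough in terms of $\delta$ and $k_0$ accomplishes this.

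The main technical obstacle is verifying that the error $O(\log(1+u)\log(1+\omega(Q))/\log y)$ in Proposition \ref{prop:FT} is genuinely $o(1)$ and that the hypothesis $\omega(Q)\le y^{1/\log(1+u)}$ holds. Under the assumption $\omega(q)\le x^\kappa$, combined with $\omega(m)\ll\log m$, one has $\omega(Q)\ll x^\kappa$ for large $m$. Since $\log y \asymp \log x$ and $\log(1+u)$ is bounded in terms of $k_0,\delta$, both requirements reduce to choosing $\kappa$ sufficiently small in terms of $\delta$ and $k_0$ -- small enough that the resulting constant bound on the Tenenbaum error lies below the tolerance imposed by the case $k = k_0$.
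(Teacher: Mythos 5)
Your argument is correct and takes essentially the same route as the paper: reduce the sum to $k\,\Psi_{mq}(x,y)$ by noting that $y$-smooth $n$ coprime to $mq$ automatically satisfy $\chi(n)=1$, then apply Proposition~\ref{prop:FT} together with Hildebrand's estimate and choose $\kappa$ small enough that the resulting error is negligible. The only cosmetic difference is that you make the value of $\eta$ explicit as $\rho(u)-1/k_0$ and track the $k$-dependence of the slack at the end, while the paper simply writes $\Psi(x,y)\ge(1/k_0+\eta)x$ and absorbs the Tenenbaum error by passing from $\eta$ to $\tfrac{2}{3}\eta$; your verification that $k=k_0$ is the worst case is a correct and useful sanity check that the paper leaves implicit.
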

\begin{proof} Observe that
\[ \sum_{\substack{n \le x \\ \gcd(n,mq)=1}} (1+\chi(n) + \dots + \chi(n)^{k-1}) = k \sum_{\substack{n \le x \\ \gcd(n,q)=1,~\chi(n)=1}} 1  \ge k\sum_{\substack{n \le x\\ \gcd(n,mq)=1 \\ p \mid n \Rightarrow p \le y}} 1 = k \cdot \Psi_{mq}(x,y).\]
We estimate $\Psi_{mq}(x,y)$ using Proposition \ref{prop:FT} and the succeeding remark. We have $u \asymp_{k_0} 1$, or equivalently, $\log y\asymp_{k_0} \log{x}$. So if $\kappa$ is sufficiently small in terms of $k_0$, and $\omega(q) \le x^{\kappa}$, Proposition \ref{prop:FT} gives
\begin{align*} \Psi_{mq}(x,y) &= \bigg(\Psi(x,y)\prod_{\substack{p \mid mq \\ p \le y}}\left(1-\frac{1}{p}\right)\bigg) \left(1+O_{k_0}\left(\frac{\log(1+x^{\kappa})}{\log{x}}\right)\right)\\
&\ge \Psi(x,y) \frac{\phi(mq)}{mq}  \left(1+O_{k_0}\left(\frac{\log(1+x^{\kappa})}{\log{x}}\right)\right).\end{align*}
Now the result of Hildebrand quoted in the introduction (or a much more elementary theorem) shows that $\Psi(x,y) = \Psi(x, x^{\frac{1}{u_{k_0}}+\delta}) \ge (\frac{1}{k_0}+\eta) x$ for a certain $\eta= \eta(k_0,\delta) > 0$ and all large $x$. So if $\kappa$ is fixed sufficiently small, depending on $k_0$ and $\delta$, and $x$ is sufficiently large,
\[ \Psi_{mq}(x,y) > \left(\frac{1}{k_0} + \frac{2}{3}\eta\right) \frac{\phi(mq)}{mq} x. \]
Hence,
\[ \sum_{\substack{n \le x \\ \gcd(n,q)=1}} (1+\chi(n) + \dots + \chi(n)^{k-1}) \ge \left(\frac{k}{k_0} + \frac{2k}{3}\eta\right)\frac{\phi(mq)}{mq} x \ge \left(1+\frac{2k}{3}\eta\right) \frac{\phi(mq)}{mq} x. \qedhere\]
\end{proof}

We turn next to an upper bound.

\begin{lem}\label{lem:upper} Let $\beta > 0$. There are numbers $\eta' = \eta'(\delta) > 0$, $\kappa' = \kappa'(\delta,\beta)>0$ and $m_0 = m_0(\delta,\beta)$ with the following property: If $m > m_0$ and $\omega(q) \le x^{\kappa'}$, then
\[ \sum_{\substack{n \le x \\ \gcd(n,mq)=1}} (1+\chi(n) + \chi(n)^2 + \dots + \chi(n)^{k-1}) \le (1+\beta) \frac{\phi(mq)}{mq}x +O_{\delta}(k x^{1-\eta'}). \]
\end{lem}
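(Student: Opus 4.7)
Plan. Expand the sum as $\sum_{j=0}^{k-1} B_j$, where $B_j := \sum_{n\le x,\,\gcd(n,mq)=1}\chi^j(n)$. The term $j=0$ counts integers up to $x$ coprime to $mq$ and supplies the main term $\frac{\phi(mq)}{mq}x$, while the character sums $B_j$ with $j\ge 1$ (each involving a nontrivial character mod $m$ of order dividing $k$) will be absorbed into the $O_\delta(k x^{1-\eta'})$ error.

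For the $j=0$ term I would invoke the fundamental lemma of the sieve. Handle the primes of $m$ by exact Möbius inversion, which is permissible since $2^{\omega(m)}=x^{o(1)}$: write $B_0 = \sum_{e\mid m}\mu(e)\,\#\{n\le x/e:\gcd(n,q)=1\}$ and apply the fundamental lemma to each sifted count, sifting by the primes of $q$ (all at most $y$). Choosing the sieve level $D$ so that $s=\log D/\log y$ is large enough for the fundamental lemma's relative error $O(e^{-s})$ to drop below $\beta$, one arrives at $B_0 \le (1+\beta)\,\frac{\phi(mq)}{mq}\,x + O(D\cdot 2^{\omega(m)})$. Taking $D$ to be a suitable power of $x$ makes the remainder $O_\delta(x^{1-\eta'})$.

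For $j\ge 1$, expand by Möbius on the coprimality, $B_j = \sum_{d\mid q}\mu(d)\,\chi^j(d)\,S_j(x/d)$, with $S_j(X):=\sum_{n\le X}\chi^j(n)$. Bound each $S_j(x/d)$ by Proposition \ref{prop:norton} with a fixed $r=r(\delta)$ of order $1/\delta$ and a small $\epsilon=\epsilon(\delta)$. Using the defining relation $x = R_k(m)m^{1/4+\delta}$ to rewrite $R_k(m)^{1/r}x^{1-1/r}$ as $x/m^{(1/4+\delta)/r}$, the Burgess bound simplifies to $|S_j(x/d)|\ll x\,d^{-(1-1/r)}\,m^{1/(4r^2)-\delta/r+\epsilon}$, with the exponent of $m$ negative of order $-\delta^2$. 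Summing over $d\mid q$ with $d\le D$ via $\sum_{d\le D}d^{-(1-1/r)}\ll rD^{1/r}$ yields $\sum_{d\le D}|S_j(x/d)|\ll x^{1-\eta'}$ for a suitable $\eta'=\eta'(\delta)>0$. The tail $d>D$ is controlled via the trivial bound $|S_j(x/d)|\le x/d$ and a truncation argument using the hypothesis $\omega(q)\le x^{\kappa'}$, with $\kappa'=\kappa'(\delta,\beta)$ chosen sufficiently small. Summing the $k$ contributions produces the claimed bound. The main obstacle is to balance $D$ across competing constraints: it must be large enough that the fundamental lemma delivers the factor $(1+\beta)$, yet small enough that both $D\cdot 2^{\omega(m)}$ and the truncated Burgess sums over $d\le D$ fit inside $O_\delta(k x^{1-\eta'})$; the assumption $\omega(q)\le x^{\kappa'}$ is exactly what is needed to dispose of the Möbius tail in the character-sum terms.
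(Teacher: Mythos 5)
Your decomposition into $B_0,\dots,B_{k-1}$ is structurally different from the paper's argument, and the treatment of the $j\ge 1$ pieces contains a genuine gap. The paper never separates the characters: it sets $\sA=\{n\le x:\gcd(n,m)=1,\,\chi(n)=1\}$, observes that $\sum_{j}\chi^j(n)$ is nonnegative (equal to $k\cdot\1_{\sA}$), and applies the upper-bound fundamental lemma to the \emph{counting} problem of sifting $\sA$ by the primes of $q$. Burgess enters only through the remainder $r(d)=\#\{n\in\sA:d\mid n\}-\frac{x\varphi(m)}{dkm}$, and crucially the sieve supplies a truncated combinatorial identity: the remainder sum runs only over $d<z^{2v}$, $d\mid q$. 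Your proposal instead performs a \emph{full} M\"obius expansion $B_j=\sum_{d\mid q}\mu(d)\chi^j(d)S_j(x/d)$ and then tries to discard the tail $d>D$ by the trivial bound $|S_j(x/d)|\le x/d$. This tail cannot be made $\ll x^{1-\eta'}$: the hypothesis $\omega(q)\le x^{\kappa'}$ still allows, e.g., $q$ to contain all primes up to roughly $x^{\kappa'}$, in which case $\sum_{d\mid q,\,d>D}1/d$ is of order $\log x$ rather than $x^{-\eta'}$, so $\sum_{d\mid q,\,d>D}x/d\gg x\log x$. A full M\"obius expansion over the squarefree divisors of $q$ has up to $2^{\omega(q)}$ terms and has no valid truncation of the sort you invoke; this is precisely the problem the sieve machinery is designed to avoid, and it is why the paper keeps all $k$ characters bundled together so that the sieve inequality (which requires a nonnegative summand) is applicable.

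There is also a smaller problem with your $j=0$ step. Writing $B_0=\sum_{e\mid m}\mu(e)\#\{n\le x/e:\gcd(n,q)=1\}$ and then ``applying the fundamental lemma to each sifted count'' does not produce the asserted upper bound: for the terms with $\mu(e)=-1$ you would need a \emph{lower} bound on $\#\{n\le x/e:\gcd(n,q)=1\}$, but the fundamental lemma lower-bounds the larger quantity $\#\{n\le x/e:p\mid(n,q)\Rightarrow p\ge z\}$, and trying to patch this (via either the trivial lower bound $0$, or a lower-bound sieve plus a correction for primes $\ge z$) introduces $\sum_{e\mid m}1/e=\sigma(m)/m$ in place of $\varphi(m)/m$, an extra factor as large as $(\log\log m)^2$ that the fixed $\beta$-slack cannot absorb. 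The fix is to sieve the set $\{n\le x:\gcd(n,m)=1\}$ directly by the primes of $q$ (using $\#\{n\le x:\gcd(n,m)=1,\,d\mid n\}=\frac{x\varphi(m)}{dm}+O(2^{\omega(m)})$ for $d\mid q$), rather than M\"obius-expanding on $m$ first. But fixing $B_0$ does not rescue the proposal, because of the fatal issue with the $B_j$, $j\ge 1$, described above.
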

\begin{proof} We let $\sA = \{n \le x: \gcd(n,m)=1,~\chi(n)=1\}$ and observe that
\begin{equation}\label{eq:fundidentity} \sum_{\substack{n \le x \\ \gcd(n,mq)=1}} (1+\chi(n) + \chi(n)^2 + \dots + \chi(n)^{k-1}) = k \sum_{\substack{n \in \sA \\ \gcd(n,q)=1}} 1. \end{equation}
We apply the fundamental lemma of the sieve to estimate the right-hand sum. (The precise form of the fundamental lemma is not so important, but we have in mind \cite[Theorem 4.1, p. 29]{diamond08}.) Let $d \in [1,x]$ be a squarefree integer dividing $q$. Then
\begin{align*} \sum_{\substack{n \in \sA \\ d \mid n}} 1 &= \frac{1}{k} \sum_{\substack{n \le x \\\gcd(n,m)=1,~d\mid n}} (1+\chi(n) + \dots + \chi(n)^{k-1}).
\end{align*}
For each $j=0,1,2,\dots, k-1$,
\[ \sum_{\substack{n \le x \\ \gcd(n,m)=1,~d\mid n}}\chi^j(n) = \chi^j(d) \sum_{\substack{e \le x/d \\ \gcd(e,m)=1}} \chi^j(e). \]
When $j=0$, the right-hand side is $\frac{x}{d}\frac{\phi(m)}{m} +O_{\epsilon}(m^{\epsilon})$, by a straightforward inclusion-exclusion. For $j \in \{1,2,\dots, k-1\}$,  Proposition \ref{prop:norton} gives
\begin{align*} \sum_{\substack{e \le x/d \\ \gcd(e,m)=1}} \chi^j(e) = \sum_{e \le x/d} \chi^j(e) \sum_{\substack{f \mid e \\ f\mid m}} \mu(f) &= \sum_{f \mid m} \mu(f) \chi^j(f) \sum_{g \le x/df} \chi^j(g) \\
&\ll_{\epsilon,r} R_k(m)^{1/r} x^{1-\frac1r} d^{-1+\frac{1}{r}} m^{\frac{r+1}{4r^2}+\epsilon} \sum_{f \mid m} f^{-1+\frac{1}{r}} \\&\ll_{\epsilon} R_k(m)^{1/r} x^{1-\frac1r} d^{-1+\frac{1}{r}} m^{\frac{r+1}{4r^2}+2\epsilon};
\end{align*}
here $r \ge 2$ and $\epsilon > 0$ are parameters to be chosen. (We used in the last step that the sum on $f$ has only $O_{\epsilon}(m^{\epsilon})$ terms, each of which is $O(1)$.) Assembling the preceding estimates,
\[ \sum_{\substack{n \in \sA \\ d \mid n}} 1 = \frac{x}{dk}\frac{\phi(m)}{m} + r(d),
\quad\text{where}\quad r(d) \ll_{\epsilon,r} R_k(m)^{1/r} x^{1-\frac1r} d^{-1+\frac{1}{r}} m^{\frac{r+1}{4r^2}+2\epsilon}. \]
By the fundamental lemma, for any choices of real parameters $z\ge 2$ and $v\ge 1$ with $z^{2v} < x$,
\begin{multline*} \sum_{\substack{n \in \sA\\\gcd(n,q)=1}} 1 \le \sum_{\substack{n \in \sA \\ p \mid \gcd(n,q) \Rightarrow p \ge z}} 1 = \Bigg(\frac{x}{k}\frac{\phi(m)}{m} \prod_{\substack{p \mid q \\ p< z}}\left(1-\frac{1}{p}\right)\Bigg)\left(1 + O(v^{-v})\right) \\ + O_{\epsilon,r}\Bigg(R_k(m)^{1/r} x^{1-\frac1r} m^{\frac{r+1}{4r^2}+2\epsilon} \sum_{\substack{d < z^{2v} \\ d\mid q}} \mu^2(d) 3^{\omega(d)} d^{-1+\frac1r}\Bigg).\end{multline*}

We now make a choice of parameters. Let $r = \lceil \frac{1}{2\delta}\rceil$ (so that $\delta \ge \frac{1}{2r}$). Since $x=R_k(m)\cdot m^{1/4+\delta}$, we have
\[ R_k(m)^{1/r} x^{1-\frac1r} m^{\frac{r+1}{4r^2}} = x \cdot m^{-\frac{1}{4r} -\delta/r} m^{\frac{r+1}{4r^2}} = x \cdot m^{\frac{1}{r}(\frac{1}{4r}-\delta)} \le x\cdot m^{-\frac{\delta}{4r^2}}. \]
We take $\epsilon = \frac{\delta}{16r^2}$, so that
\[ m^{2\epsilon} = m^{\frac{\delta}{8r^2}}. \]
Since $r\ge 2$ and $3^{\omega(d)} \ll d^{1/2}$, each term in the sum on $d$ is $O(1)$. Putting it all together, the $O$-term above is
\[ \ll_{\delta} x \cdot m^{-\frac{\delta}{4r^2}} \cdot  m^{\frac{\delta}{8r^2}} \cdot z^{2v}. \]
Since $x = R_k(m) \cdot m^{1/4+\delta} \le m^{3/4} \cdot m^{1/4+\delta} < m^2$, this upper bound is $\ll_{\delta} x^{1-\frac{\delta}{16r^2}} z^{2v}$. Taking $z = x^{\frac{\delta}{64r^2 v}}$ gives a final upper bound on the $O$-term of
\[ \ll_{\delta} x^{1-\eta'},\quad\text{where}\quad \eta' = \frac{\delta}{32r^2}. \]

Turning attention to the main term, we fix $v$ large enough that the factor $1+O(v^{-v})$ is smaller than $1+\frac{1}{2}\beta$. Then our main term above does not exceed
\begin{align*}  \frac{x}{k} \frac{\phi(mq)}{mq} \left(1+\frac{1}{2}\beta\right) \prod_{\substack{p \mid q\\ p \ge z}} \left(1-\frac{1}{p}\right)^{-1} &\le \frac{x}{k} \frac{\phi(mq)}{mq} \left(1+\frac{1}{2}\beta\right) \exp\bigg(2\sum_{\substack{p \mid q \\ p \ge z}}\frac{1}{p}\bigg) \\&\le \frac{x}{k} \frac{\phi(mq)}{mq} \left(1+\frac{1}{2}\beta\right) \exp(2\omega(q) z^{-1}). \end{align*}
Take $\kappa' = \frac{\delta}{128r^2 v}$. Under the assumption that $\omega(q) \le x^{\kappa'}$, we have $2 \omega(q) z^{-1} \le 2x^{-\delta/128r^2 v}$, and $\exp(2\omega(q) z^{-1}) = 1 + O(x^{-\delta/128r^2 v})$. So once $x$ (or equivalently, $m$) is large enough, our main term is smaller than $\frac{x}{k}\frac{\phi(mq)}{mq}(1+\beta)$. So we have shown that for large $m$,
\[ \sum_{\substack{n \in \sA \\ \gcd(n,q)=1}} 1 \le \frac{x}{k}\frac{\phi(mq)}{mq}(1+\beta) + O_{\delta}(x^{1-\eta'}). \]
Recalling \eqref{eq:fundidentity} finishes the proof.
\end{proof}

\begin{proof}[Completion of the proof of Theorem \ref{thm:fixedprime}] We keep the notation from earlier in this section. Let $\eta$, $\kappa$ be as specified in Lemma \ref{lem:lower}. With $\beta = \eta/2$, choose $\eta'$ and $\kappa'$ as in Lemma \ref{lem:upper}. If $m$ is large and we assume that
\[ \omega(q) \le x^{\kappa''}, \quad\text{where} \quad\kappa'' = \min\{\kappa,\kappa'\}, \]
then these lemmas imply that
\[ \bigg(1+\frac{2k}{3}\eta\bigg) \frac{\phi(mq)}{mq} x \le \bigg(1+\frac{1}{2}\eta\bigg) \frac{\phi(mq)}{mq} x + O_{\delta}(kx^{1-\eta'}). \]
Rearranging,
\[ k \eta \frac{\phi(mq)}{mq} x \ll \frac{4k-3}{6} \eta \cdot \frac{\phi(mq)}{mq}x \ll_{\delta} k x^{1-\eta'}, \]
and so
\[ \frac{mq}{\phi(mq)} \gg_{k_0,\delta} x^{\eta'}. \]
Noting that $m < x^{4}$ and $q \le y^{\omega(q)}\le x^{\omega(q)}$, we see that for large $x$,
\[ \frac{mq}{\phi(mq)} \ll \log\log(mq+2) \ll \log\log{x} + \log(\omega(q)+2) \ll \log{x}. \]
Comparing with the above lower bound, we see that $x$, and hence $m$, is bounded. Turning it around, for $m$ large enough, there are at least $x^{\kappa''}$ prime $\chi$-nonresidues in $[1,y]$.
\end{proof}

\begin{proof}[Sketch of the proof of Theorem \ref{thm:fixedprime0}] The proof of Theorem \ref{thm:fixedprime0} is quite similar, except that now we take $x = m^{1/3+\delta}$. With this choice of $x$, we can apply the Burgess bounds with $r=3$, which allows us to omit the factor of $R_k(m)$ in the resulting estimates.\end{proof}

\subsection{Deduction of Theorem \ref{thm:fixedprime}} Let $\varepsilon> 0$ and $k_0 \ge 2$ be fixed. Let $\chi$ be a nonprincipal character mod $m$ of order $k$, where $k \ge k_0$. We would like to show that as long as $m$ is large enough there must be at least $m^{\kappa}$ prime $\chi$-nonresidues not exceeding $x^{1/4u_{k_0}+\varepsilon}$, for a certain $\kappa = \kappa(\varepsilon,k_0) > 0$. Let $k_1$ be the smallest positive integer with $3u_{k_1} > 4u_{k_0}$.  If $k \ge k_1$, apply Theorem \ref{thm:fixedprime0}: We find that for large $m$, there are at least $m^{\kappa_0}$ prime $\chi$-nonresidues
\[ \le m^{\frac{1}{3u_{k_1}} + \varepsilon} \le m^{\frac{1}{4u_{k_0}} + \varepsilon}, \]
where $\kappa_0 = \kappa(\varepsilon,k_1)$ in the notation of Theorem \ref{thm:fixedprime0}. Suppose instead that $k_0 \le k< k_1$. Then $R_k(m)$ is bounded in terms of $k_0$. Theorem \ref{thm:fixedprime1} thus shows that for large $m$, there are at least $m^{\kappa_1}$ prime $\chi$-nonresidues
\[ \le R_k(m) m^{\frac{1}{4u_{k_0}} + \varepsilon/2} \le m^{\frac{1}{4u_{k_0}}+\varepsilon}, \]
where $\kappa_1 = \kappa(\varepsilon/2,k_0)$ in the notation of Theorem \ref{thm:fixedprime1}. Theorem \ref{thm:fixedprime} follows with $\kappa = \min\{\kappa_0,\kappa_1\}$.

\begin{remark} By a minor modification of our proof, one can establish the following more general result. Theorem \ref{thm:fixedprime} corresponds to the case $H = \ker\chi$.

\begin{thm} Let $\varepsilon >0$ and $k_0 \ge 2$. There are numbers $m_0(\varepsilon,k_0)$ and $\kappa = \kappa(\varepsilon,k_0) > 0$ for which the following holds: For all $m > m_0$ and every proper subgroup $H$ of $G=(\Z/m\Z)^{\times}$ of index $k \ge k_0$, there are more than $m^{\kappa}$ primes $\ell$ not exceeding $m^{\frac{1}{4u_{k_0}}+\varepsilon}$ with $\ell \nmid m$ and $\ell \bmod{m}\notin H$.\end{thm}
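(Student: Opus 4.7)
The plan is to run the proof of Theorem \ref{thm:fixedprime} almost verbatim, with the sum $1 + \chi(n) + \chi^2(n) + \dots + \chi^{k-1}(n)$ over powers of a single character of order $k$ replaced by $\sum_{\psi} \psi(n)$, where $\psi$ ranges over the $k$ Dirichlet characters modulo $m$ that are trivial on $H$. The key identity is orthogonality: for each $n$ coprime to $m$,
\[ \sum_{\psi} \psi(n) = k \cdot \mathbf{1}[n \bmod{m} \in H]. \]
The crucial uniformity is that each such $\psi$ has order dividing the exponent of $G/H$, which in turn divides $|G/H| = k$; so Proposition \ref{prop:norton} applies to every nontrivial $\psi$ with the same parameter $k$.

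Mirroring \S 2.2, set $x = R_k(m) \cdot m^{1/4 + \delta}$ and $y = x^{1/u_{k_0} + \delta}$, and let $q$ be the product of primes $\ell \le y$ with $\ell \nmid m$ and $\ell \bmod{m} \notin H$, so $\gcd(q,m)=1$. Estimate $S := \sum_{n \le x,\, \gcd(n,mq)=1} \sum_{\psi} \psi(n)$ in two ways. For the lower bound, restrict to $y$-smooth $n$: any such $n$ coprime to $mq$ has every prime factor congruent mod $m$ to an element of $H$, so by multiplicativity $n \bmod{m} \in H$ and contributes $k$ to the inner sum. Tenenbaum's theorem (Proposition \ref{prop:FT}) together with Hildebrand's bound on $\Psi(x,y)$ yields $S \ge (1 + \tfrac{2k}{3}\eta) \tfrac{\phi(mq)}{mq} x$ for some $\eta = \eta(\delta,k_0) > 0$, provided $\omega(q) \le x^\kappa$, as in Lemma \ref{lem:lower}. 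For the upper bound, apply the fundamental lemma to the set $\mathcal{A} = \{n \le x : \gcd(n,m) = 1,\ n \bmod{m} \in H\}$; for squarefree $d \mid q$, orthogonality gives
\[ |\mathcal{A}_d| = \frac{1}{k} \sum_{\psi} \psi(d) \sum_{\substack{e \le x/d \\ \gcd(e,m)=1}} \psi(e). \]
The principal $\psi$ contributes the main term $\tfrac{x}{dk}\tfrac{\phi(m)}{m} + O_\epsilon(m^\epsilon)$, and the $k-1$ nontrivial $\psi$'s each contribute a Burgess remainder bounded by Proposition \ref{prop:norton}. The parameter choices ($r,\epsilon,z,v$) and the ensuing arithmetic are copied from the proof of Lemma \ref{lem:upper} without change.

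Comparing the two estimates forces $\omega(q) > x^{\kappa''}$ for some $\kappa'' = \kappa''(\delta,k_0) > 0$, proving the analog of Theorem \ref{thm:fixedprime1}. The analog of Theorem \ref{thm:fixedprime0} is obtained identically, taking $x = m^{1/3+\delta}$ and $r=3$ in Proposition \ref{prop:norton} to absorb $R_k(m)$. The deduction of the theorem from these two variants proceeds verbatim as in \S 2.3, splitting into $k \ge k_1$ and $k_0 \le k < k_1$ with the same threshold $k_1$ chosen so that $3u_{k_1} > 4u_{k_0}$. No serious obstacle arises: the generalization succeeds precisely because $\widehat{G/H}$ has exactly $k$ elements, each of order dividing $k$, so the $k$-dependence tracked throughout the original argument transfers directly. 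The only piece of bookkeeping worth flagging is that the definition of $\mathcal{A}$ now involves a set of residue classes rather than a single level set of $\chi$, but this affects nothing beyond the orthogonality expansion.
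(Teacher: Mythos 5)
Your proposal is correct and follows precisely the route the paper sketches in its concluding remark of Section~2: replace $1+\chi(n)+\dots+\chi(n)^{k-1}$ by $\sum_{\psi\in\widehat{G/H}}\psi(n)$ and rerun the argument, noting that each of the $k-1$ nontrivial $\psi$ has order dividing $k$, so Proposition~\ref{prop:norton} gives a uniform Burgess remainder with the same $R_k(m)$. You have correctly identified and filled in all the details the paper leaves to the reader (the orthogonality expansion in both the lower- and upper-bound lemmas, the observation that $y$-smooth $n$ coprime to $mq$ necessarily lies in $H \bmod m$, and the unchanged parameter choices and final deduction), so this is the paper's intended proof.
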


\noindent This strengthens \cite[Theorem 1.20]{norton98}, where the bound $O_{k_0,\epsilon}(m^{\frac{1}{4u_{k_0}}+\varepsilon})$ is established for the first such prime $\ell$.

The main idea in the proof of the generalization is to replace $1+\chi(n)+\dots + \chi(n)^{k-1}$ with $\sum_{\chi\in \widehat{G/H}} \chi(n)$, where $\widehat{G/H}$ denotes the group of characters $\chi$ mod $m$ with $\ker \chi \supset H$. We leave the remaining details to the reader.
\end{remark}

\section{Small prime residues of quadratic characters: Proof of Theorem \ref{thm:smallresidue}}
The next proposition is a variant of \cite[Theorem 2]{VL66}. Given a character $\chi$, we let $r_{\chi}(n) = \sum_{d \mid n} \chi(d)$. Since $\chi$ will be clear from context, we will suppress the subscript.
\begin{prop}\label{prop:LV} For each $\epsilon > 0$, there is a constant $\eta = \eta(\epsilon) >0$ for which the following holds: If $\chi$ is a quadratic character modulo $m$ and $x \ge m^{1/4+\epsilon}$, then
\[ \sum_{n \le x} r(n) = L(1,\chi) x + O_{\epsilon}(x^{1-\eta}). \]                                                                                                         \end{prop}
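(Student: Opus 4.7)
The plan is to apply the Dirichlet hyperbola method to the identity $\sum_{n \le x} r(n) = \sum_{de \le x}\chi(d)$ and bound the pieces using Proposition \ref{prop:norton}. Since $\chi$ is quadratic, $Q(2)=1$ and hence $R_2(m) = 1$, so for any integer $r \ge 1$ and any $\epsilon'>0$ we have
\[ T(t) := \sum_{d \le t}\chi(d) \ll_{\epsilon',r} t^{1-1/r}m^{(r+1)/(4r^2)+\epsilon'}. \]

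Fix a parameter $D \in [1, x]$ and decompose
\[ \sum_{n \le x} r(n) = \sum_{d \le D}\chi(d)\lfloor x/d\rfloor + \sum_{e \le x/D} T(x/e) - T(D)\lfloor x/D\rfloor. \]
In the first sum, replacing $\lfloor x/d\rfloor$ by $x/d$ costs $O(D)$, and completing $\sum_{d \le D}\chi(d)/d$ to $L(1,\chi)$ by partial summation --- using Burgess on the tail $T(t)$ for $t \ge D$ --- introduces an error $\ll_{\epsilon',r} xD^{-1/r}m^{(r+1)/(4r^2)+\epsilon'}$. The second sum is handled directly by Burgess via the elementary estimate $\sum_{e \le x/D}(x/e)^{1-1/r} \ll r \cdot x \cdot D^{-1/r}$, and the boundary term $T(D)\lfloor x/D\rfloor$ is of the same order. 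Altogether, the total error is $\ll_{\epsilon',r} D + xD^{-1/r}m^{(r+1)/(4r^2)+\epsilon'}$.

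Balancing the two terms gives $D \asymp x^{r/(r+1)}m^{1/(4r)+O(\epsilon')}$, and the resulting error is of the same order. For a power saving $\ll x^{1-\eta}$, using $x \ge m^{1/4+\epsilon}$, the requirement reduces to $(1/4+\epsilon)/(r+1) > 1/(4r)$, equivalently $\epsilon > 1/(4r)$. So one concludes by choosing any integer $r > 1/(4\epsilon)$, taking $\epsilon'$ sufficiently small in terms of $r$ and $\epsilon$, and letting $\eta = \eta(\epsilon)>0$ be the corresponding power saving. The essential point --- and the reason for the threshold $x \ge m^{1/4+\epsilon}$ --- is that the Burgess bound on $T(t)$ becomes nontrivial only when $t \gtrsim m^{1/4}$, and this is exactly what is met at the optimal choice of $D$; any attempt to push below $m^{1/4}$ would force $r \to \infty$, collapsing the power saving.
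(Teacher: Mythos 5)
Your proof is correct and follows essentially the same route as the paper: a Dirichlet hyperbola decomposition of $\sum_{n\le x} r(n)$, with Burgess (via Proposition~\ref{prop:norton}, noting $R_2(m)=1$) applied to the partial sums $\sum_{d\le t}\chi(d)$, partial summation for the tail of $L(1,\chi)$, and a parameter balance that works precisely because $x\ge m^{1/4+\epsilon}$. The only cosmetic difference is that the paper fixes the hyperbola cut at $y=x^{\upsilon}$ with $\upsilon=\frac{1/4+\epsilon/2}{1/4+\epsilon}$ and packages the Burgess input as a single power-saving estimate $\sum_{d\le T}\chi(d)\ll T^{1-\eta_0}$ valid for $T\ge y$, whereas you carry the explicit $(r,\epsilon')$-dependence through and optimize $D$ at the end; both yield $O_\epsilon(x^{1-\eta})$.
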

\begin{proof} With $\upsilon = \frac{1/4+\epsilon/2}{1/4+\epsilon}$, put $y = x^{\upsilon}$, so that $y \ge m^{\frac{1}{4}+\frac{1}{2}\epsilon}$. Put $z=x/y$. By Dirichlet's hyperbola method,
\begin{equation}\label{eq:hyperbola} \sum_{n \le x} r(n)  = \sum_{d \le y}\chi(d) \sum_{e \le x/d} 1 + \sum_{e \le z} \sum_{d \le x/e} \chi(d) - \sum_{d\le y}\chi(d)\sum_{e\le z}1. \end{equation}
By Proposition \ref{prop:norton} (with $k=2$, so that $R_k(m)^{1/r}=1$), there is an $\eta_0 =\eta_0(\epsilon) > 0$ with $\sum_{d \le T} \chi(d) \ll_{\epsilon} T^{1-\eta_0} \quad\text{for all}\quad T \ge y$. Thus, the second double sum on the right of \eqref{eq:hyperbola} is $\ll_{\delta} x^{1-\eta_0} \sum_{e \le z} e^{\eta_0-1} \ll_{\delta} x (z/x)^{\eta_0} = x y^{-\eta_0}$. Similarly, the third double sum is $\ll_{\epsilon} z y^{1-\eta_0} = x y^{-\eta_0}$. Finally,
\[ \sum_{d \le y}\chi(d) \sum_{e \le x/d} 1=\sum_{d\le y} \chi(d) \left(\frac{x}{d}+O(1)\right) = xL(1,\chi) - x\sum_{d > y} \frac{\chi(d)}{d} + O(y) = x L(1,\chi) + O_{\epsilon}(xy^{-\eta_0}) + O(y). \]
(Here the sum on $d>y$ has been handled by partial summation.)
Collecting our estimates and keeping in mind that $y=x^\upsilon$, we obtain the theorem with $\eta$ defined by $1-\eta = \max\{\upsilon,1-v\eta_0\}$.
\end{proof}

\begin{proof}[Proof of Theorem \ref{thm:smallresidue}] Let $\varepsilon \in (0, \frac14)$ and let $\chi$ be a quadratic character modulo $m$. Let
\[ x = m^{\frac{1}{4}+\varepsilon}, \]
and let $q$ be the product of the primes $\ell \le x$ with $\chi(\ell)=1$. We suppose that $\omega(q) \le (\log{m})^{A}$, and we show this implies that $m$ is bounded by a constant depending on $\varepsilon$ and $A$. Throughout this proof, we suppress any dependence on $\varepsilon$ and $A$ in our $O$-notation.

By Proposition \ref{prop:LV},
\begin{equation} \sum_{n \le x} r(n) = L(1,\chi) \cdot x + O(x^{1-\eta}). \label{eq:ldub}\end{equation}
We can estimate the sum in a second way. Observe that
\begin{equation}\label{eq:rnexpression} r(n) = \prod_{\ell^e \parallel n} \left(1+\chi(\ell) + \dots + \chi(\ell^e)\right) \ge 0. \end{equation}
Hence, if the subset $\Ss$ of $[1,x]$ is chosen to contain the support of $r(n)$ on $[1,x]$, then
\[ 0 \le \sum_{n \le x} r(n) \le \#\Ss \cdot \left(\max_{n \in \Ss} r(n)\right). \]
Examining the expression in \eqref{eq:rnexpression} for $r(n)$, we see $\Ss$ can be chosen as the set of $n\le x$ where every prime that appears to the first power in the factorization of $n$ divides $mq$. For each $n \in \Ss$, we can write $n=n_1 n_2$, where $n_1$ is a squarefree divisor of $mq$ and $n_2$ is squarefull. The number of elements of $\Ss$ with $n_2 > x^{1/2}$ is $O(x^{3/4})$. For the remaining elements of $\Ss$, we have $n_1 \le x/n_2$ and $n_1$ is a squarefree product of primes dividing $mq$. There is a bijection
\[ \iota\colon \{\text{squarefree divisors of $mq$}\} \to \{\text{squarefrees composed of the first $\omega(mq)$ primes}\}
\]
with $\iota(r) \le r$ for all $r$. Hence, given $n_2$, the number of choices for $n_1$ is at most the number of integers in $[1,x/n_2]$ supported on the product of the first $\omega(mq)$ primes. By our assumption on $\omega(q)$, those primes all belong to the interval $[1, (\log{x})^{A+1}]$, once $x$ is large. Hence, given $n_2$, the number of possible values of $n_1$ is at most
\[ \Psi(x/n_2, (\log{x})^{A+1}). \]
For fixed $\theta \ge 1$, a classical theorem of de Bruijn \cite{dB66} asserts that  $\Psi(X,(\log{X})^{\theta}) = X^{1-\frac{1}{\theta}+o(1)}$, as $X\to\infty$.  Since $x/n_2 \ge x^{1/2}$, we deduce that
\[  \Psi(x/n_2, (\log{x})^{A+1}) \le (x/n_2)^{1-\frac{1}{A+2}} \]
if $x$ is large. Summing on squarefull $n_2 \le x^{1/4}$, we see that the number of elements of $\Ss$ arising in this way is $O(x^{1-\frac{1}{A+2}})$. Hence,
\[ \#\Ss \ll x^{3/4} + x^{1-\frac{1}{A+2}} \ll x^{1-\eta'}, \quad\text{where}\quad \eta'=\min\left\{\frac14,\frac{1}{A+2}\right\}. \]
Since $r(n) \le \tau(n) \ll x^{\eta'/2}$ for $n \le x$,
\begin{equation}\label{eq:udub} \sum_{n\le x} r(n) \ll \#\Ss\cdot x^{\eta'/2} \ll x^{1-\eta'/2}. \end{equation}
Comparing \eqref{eq:ldub} and \eqref{eq:udub} gives
\[ L(1,\chi) \ll x^{-\min\{\eta'/2,\eta\}}. \]
But for large $x$, this contradicts Siegel's theorem \cite[Theorem 11.14, p. 372]{MV07}.
\end{proof}

\begin{remark} Any improvement on Siegel's lower bound for $L(1,\chi)$ would boost the number of $\ell$ produced in Theorem \ref{thm:smallresidue}. Substantial improvements of this kind would have other closely related implications. For example, a simple modification of an argument of Wolke \cite{wolke69} shows that for any quadratic character $\chi$ mod $m$,
\[ \sum_{\substack{\ell \le m \\ \chi(\ell)= 1}}\frac{1}{\ell} \ge \frac{1}{2} \log\left(\frac{\varphi(m)}{m} L(1,\chi) \log{m}\right) + O(1), \]
where the $O(1)$ constant is absolute. {(Here is the short proof: By Proposition \ref{prop:LV}, $\frac{1}{m}\sum_{n \le m}r(n) \gg L(1,\chi)$. On the other hand, \cite[Theorem 5, p. 308]{tenenbaum95} yields $\frac{1}{m}\sum_{n \le m} r(n) \ll \frac{1}{\log{m}} \sum_{n \le m} \frac{r(n)}{n} \ll \frac{1}{\log{m}} \cdot \frac{m}{\phi(m)} \cdot \exp\left(2 \sum_{\ell \le m,~\chi(\ell)=1}\frac{1}{\ell}\right)$.)}
\end{remark}

\section*{Acknowledgments}  This work was motivated in part by observations made on \texttt{mathoverflow} by ``GH from MO'' \cite{52393}. The author is also grateful to ``Lucia'' for pointing out there the work of Bourgain--Lindenstrauss. He thanks Enrique Trevi\~no for useful feedback on an early draft. This research was supported by NSF award DMS-1402268.

{\small 
\providecommand{\bysame}{\leavevmode\hbox to3em{\hrulefill}\thinspace}
\providecommand{\MR}{\relax\ifhmode\unskip\space\fi MR }
\providecommand{\MRhref}[2]{%
  \href{http://www.ams.org/mathscinet-getitem?mr=#1}{#2}
}
\providecommand{\href}[2]{#2}

}
\end{document}